\documentclass[12pt,a4paper]{amsart}
\usepackage{amsfonts,color}
\usepackage{amsthm}
\usepackage{amsmath}
\usepackage{amscd}
\usepackage[latin2]{inputenc}
\usepackage{t1enc}
\usepackage[mathscr]{eucal}
\usepackage{indentfirst}
\usepackage{graphicx}
\usepackage{graphics}
\usepackage{pict2e}
\usepackage{epic}
\usepackage{url}
\usepackage{epstopdf}
\usepackage{comment}

\usepackage{tikz}

\usetikzlibrary{calc,decorations.pathreplacing}
\usetikzlibrary{quotes,angles}

\tikzset{point/.style={circle,fill,draw,inner sep=0,minimum size=1pt}}
\tikzset{vertex/.style={circle,fill,draw,inner sep=0,minimum size=7pt}}
\tikzset{overtex/.style={circle,fill=none,draw,inner sep=0,minimum size=7pt}}

\numberwithin{equation}{section}
\usepackage[margin=2.6cm]{geometry}

\allowdisplaybreaks

\theoremstyle{plain}
\newtheorem{Th}{Theorem}[section]
\newtheorem{Lemma}[Th]{Lemma}
\newtheorem{Cor}[Th]{Corollary}

 \theoremstyle{definition}
\newtheorem{Def}[Th]{Definition}

\newtheorem{Rem}[Th]{Remark}
\newtheorem{?}[Th]{Problem}

\newcommand{\E}{\mathbb{E}}
\newcommand{\ch}{\mathrm{ch}}

\begin{document}

\title{Evaluations of  Tutte polynomials of regular graphs}

\author[F. Bencs]{Ferenc Bencs}

\address{Alfr\'ed R\'enyi Institute of Mathematics, H-1053 Budapest Re\'altanoda utca 13-15}

\email{ferenc.bencs@gmail.com}

\author[P. Csikv\'ari]{P\'{e}ter Csikv\'{a}ri}

\address{Alfr\'ed R\'enyi Institute of Mathematics, H-1053 Budapest, Re\'altanoda utca 13-15}

\email{peter.csikvari@gmail.com}

\thanks{The first author is supported by the NKFIH (National Research, Development and Innovation Office, Hungary) grant KKP-133921.
The second author  is partially supported by the  Counting in Sparse Graphs Lend\"ulet Research Group.
}

 \subjclass[2010]{Primary: 05C30. Secondary: 05C31, 05C70}

 \keywords{forests, Tutte polynomial} 

\begin{abstract} Let $T_G(x,y)$ be the Tutte polynomial of a graph $G$. In this paper we show that if $(G_n)_n$ is a sequence of $d$-regular graphs with girth $g(G_n)\to \infty$, then for $x\geq 1$ and $0\leq y\leq 1$ we have
$$\lim_{n\to \infty}T_{G_n}(x,y)^{1/v(G_n)}=t_d(x,y),$$
where
$$t_d(x,y)=\left\{\begin{array}{lc} (d-1)\left(\frac{(d-1)^2}{(d-1)^2-x}\right)^{d/2-1}&\ \ \mbox{if}\ x\leq d-1,\\
x\left(1+\frac{1}{x-1}\right)^{d/2-1} &\ \ \mbox{if}\ x> d-1.
\end{array}\right.$$
independently of $y$ if $0\leq y\leq 1$.
If $(G_n)_n$ is a sequence of random $d$-regular graphs, then the same statement holds true asymptotically almost surely.

This theorem generalizes results of McKay ($x=1,y=1$, spanning trees of random $d$-regular graphs) and Lyons ($x=1,y=1$, spanning trees of large-girth $d$-regular graphs). Interesting special cases are $T_G(2,1)$ counting the number of spanning forests, $T_G(2,0)$ counting the number of acyclic orientations.
\end{abstract}

\maketitle

\section{Introduction}

Let $G=(V,E)$ be a graph with $v(G)$ vertices and $e(G)$ edges. For a graph $G=(V,E)$ let $T_G(x,y)$ denote its Tutte polynomial \cite{tutte1954contribution}, that is,
$$T_G(x,y)=\sum_{A\subseteq E}(x-1)^{k(A)-k(E)}(y-1)^{k(A)+|A|-v(G)},$$
where $k(A)$ denotes the number of connected components of the graph $(V,A)$, and $v(G)$ denotes the number of vertices of the graph $G$. Tutte actually defined $T_G(x,y)$ as a sum for spanning trees in terms of internally and externally active edges of the spanning trees. This definition shows that $T_G(x,y)$ has non-negative coefficients. 

The Tutte polynomial encodes a lot of quantitative information about the graph $G$. For instance, $\tau(G)=T_G(1,1)$ and $F(G)=T_G(2,1)$ count the number of spanning trees and spanning forests, respectively. By a spanning forest $F$, we mean an acyclic set of edges, and the word spanning simply refers to the fact that we always consider $V(G)$ to be the vertex set of the forest $F$. Similarly, $a(G)=T_G(2,0)$ counts the number of acyclic orientations of $G$, and $T_G(0,2)$ counts the number of strongly connected orientations. Other special cases include the chromatic polynomial $\ch(G,q)=(-1)^{v(G)-k(G)}q^{k(G)}T_G(1-q,0)$ counting the number of proper colorings of the graph $G$ with $q$ colors, and the flow polynomial $C_G(q)=(-1)^{e(G)-v(G)+k(G)}T_G(0,1-q)$ counting the nowhere-zero $\mathbb{Z}_q$-flows. For a comprehensive introduction to the theory of the Tutte polynomial. we recommend  the surveys \cite{brylawski1992tutte,crapo1969tutte,welsh1999tutte,ellis2011graph}. 

In this paper, we study the asymptotic behaviour of the Tutte polynomial of regular graphs of large girth that is analogous to the following theorems due to McKay and Lyons. For a graph $G$ let $g(G)$ denote the girth of the graph, that is, the length of the shortest cycle of $G$. 

\begin{Th}[McKay \cite{mckay1981spanning2}] \label{tree-limit1}
Let $(G_n)_n$ be a sequence of $d$-regular random graphs. Then asymptotically almost surely we have 
$$\lim_{n\to \infty}\tau(G_n)^{1/v(G_n)}=\frac{(d-1)^{d-1}}{(d^2-2d)^{d/2-1}}.$$
\end{Th}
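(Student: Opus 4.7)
The plan is to reduce the problem to a spectral statement via the Matrix-Tree theorem and then to an integral against the Kesten--McKay distribution. Writing $A=A(G_n)$ for the adjacency matrix and $d=\lambda_1>\lambda_2\ge\cdots\ge\lambda_n$ for its eigenvalues, Kirchhoff's Matrix-Tree theorem applied to the Laplacian $dI-A$ yields
\[
\tau(G_n)=\frac{1}{n}\prod_{i=2}^{n}(d-\lambda_i),
\]
and therefore
\[
\frac{1}{n}\log\tau(G_n)=\int\log(d-\lambda)\,d\widetilde{\mu}_n(\lambda)-\frac{\log n}{n},
\]
where $\widetilde{\mu}_n=\frac{1}{n}\sum_{i\ge 2}\delta_{\lambda_i}$ is the empirical spectral measure of $A$ with the Perron eigenvalue removed.

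Next I would invoke convergence of the spectral measure. For random $d$-regular graphs one shows, by the trace method applied in the configuration model together with the fact that short cycles are rare, that a.a.s.\ $\widetilde{\mu}_n$ converges weakly to the Kesten--McKay measure $\mu_d$ with density
\[
\rho_d(\lambda)=\frac{d}{2\pi}\cdot\frac{\sqrt{4(d-1)-\lambda^2}}{d^2-\lambda^2},\qquad \lambda\in[-2\sqrt{d-1},\,2\sqrt{d-1}].
\]
Concretely, the moments $\frac{1}{n}\mathrm{tr}(A^k)$ count the density of closed walks of length $k$, which concentrate near the number of closed walks from a fixed vertex in the infinite $d$-regular tree, and these match the moments of $\mu_d$.

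The main obstacle is that $\log(d-\lambda)$ blows up at $\lambda=d$, so weak convergence alone does not give $\int\log(d-\lambda)\,d\widetilde{\mu}_n\to\int\log(d-\lambda)\,d\mu_d$. I would handle this by an a.a.s.\ spectral-gap estimate $\lambda_2\le d-\delta$ for some fixed $\delta=\delta(d)>0$, together with a bound on the empirical density of eigenvalues clustering near the top of the bulk. The former can be obtained either by McKay's own trace-method arguments or by invoking Friedman's theorem; the latter follows from a quantitative version of the moment computation above. A truncation then yields
\[
\lim_{n\to\infty}\frac{1}{n}\log\tau(G_n)=\int_{-2\sqrt{d-1}}^{2\sqrt{d-1}}\log(d-\lambda)\rho_d(\lambda)\,d\lambda\qquad\text{a.a.s.}
\]

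Finally I would evaluate the integral. The natural tool is the Stieltjes transform $m_d(x)=\int(x-\lambda)^{-1}d\mu_d(\lambda)$, which for $x>2\sqrt{d-1}$ has an elementary closed form reflecting the self-similar structure of the $d$-regular tree. Integrating $m_d(t)$ from $d$ to $\infty$ and using $\int\log(t-\lambda)\,d\mu_d(\lambda)-\log t\to 0$ as $t\to\infty$ gives
\[
\int\log(d-\lambda)\,d\mu_d(\lambda)=(d-1)\log(d-1)-\left(\tfrac{d}{2}-1\right)\log\bigl(d(d-2)\bigr),
\]
which exponentiates to the asserted $(d-1)^{d-1}/(d^2-2d)^{d/2-1}$. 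I expect the hardest step to be the spectral-gap and truncation argument: excluding a non-negligible contribution from eigenvalues approaching $d$ requires information beyond weak convergence of $\widetilde{\mu}_n$, and this is the only point where the randomness of $G_n$ (as opposed to a purely local-tree-like hypothesis) genuinely enters.
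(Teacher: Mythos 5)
Your argument is correct, and it is essentially McKay's original route rather than the route this paper takes. Since the paper itself only cites Theorem~\ref{tree-limit1} and instead proves the strictly more general Theorem~\ref{tutte-limit}, the useful comparison is with the paper's proof of that generalization. You pass through the Matrix--Tree theorem, so the object whose empirical distribution you track is the \emph{adjacency} spectrum, and you must control $\log(d-\lambda)$ near the Perron edge $\lambda=d$; for this you lean on a spectral-gap estimate (e.g.\ Friedman's theorem), which is a genuinely probabilistic input beyond local weak convergence. The paper instead introduces the auxiliary polynomial $R_G(z)$, which for $d$-regular $G$ is a rescaling of the \emph{matching} polynomial, $R_G(z)=z^{n/2}\mu_G\!\bigl((d-1+z)/\sqrt{z}\bigr)$, and identifies $R_G(z+1)$ with a weighted count of pseudo-forests (Corollary~\ref{R-pseudo-forests}); Lemma~\ref{comparison} then shows forests and pseudo-forests have the same exponential growth rate for large-girth graphs. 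In that approach the relevant empirical measure is the root distribution of $\mu_G$, which by Heilmann--Lieb is deterministically supported in $[-2\sqrt{d-1},2\sqrt{d-1}]$, and for $x>1$ the evaluation point $u=(d+z)/\sqrt{z+1}$ satisfies $u\ge 2\sqrt{d-1}$ by AM--GM; the boundary cases $x=1$ and $x=d-1$ are handled by monotonicity and continuity, with no spectral-gap input and hence working also in the deterministic large-girth setting of Lyons. What your approach buys is a short, self-contained proof for $\tau(G)$; what the paper's approach buys is uniformity across the range $x\ge 1$, $0\le y\le 1$, where no Matrix--Tree analogue is available, together with applicability to deterministic large-girth sequences. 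Both arguments ultimately reduce to the same Stieltjes-transform computation against the Kesten--McKay measure (your closed form agrees with what Lemma~\ref{integral evaluation} gives at the appropriate substitution). One small redundancy in your sketch: once you have an a.a.s.\ bound $\lambda_2\le d-\delta$, the integrand $\log(d-\lambda)$ is bounded and continuous on $[-d,d-\delta]$ and weak convergence alone finishes the job; no separate control of the empirical density near the bulk edge is needed.
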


\begin{Th}[Lyons \cite{lyons2002asymptotic}] \label{tree-limit}
Let $(G_n)_n$ be a sequence of connected $d$-regular graphs such that  $\lim_{n\to \infty} g(G_n)=\infty$. Then
$$\lim_{n\to \infty}\tau(G_n)^{1/v(G_n)}=\frac{(d-1)^{d-1}}{(d^2-2d)^{d/2-1}}.$$
\end{Th}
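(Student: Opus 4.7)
The plan is to reduce $\tau(G_n)$ to a spectral integral via the Matrix--Tree Theorem and then to exploit the weak convergence of the empirical spectral measure of a large-girth $d$-regular graph to the Kesten--McKay measure of the infinite $d$-regular tree $T_d$. For a connected $d$-regular graph $G$ on $n=v(G)$ vertices with adjacency eigenvalues $d=\lambda_1>\lambda_2\geq\cdots\geq\lambda_n$, the Matrix--Tree Theorem gives $\tau(G)=\frac{1}{n}\prod_{i=2}^n(d-\lambda_i)$, hence
$$\frac{1}{v(G)}\log\tau(G)=\frac{1}{n}\sum_{i=2}^n\log(d-\lambda_i)-\frac{\log n}{n}.$$
The second term is $o(1)$, so the problem reduces to understanding the integral of $\log(d-\lambda)$ against the empirical spectral measure $\mu_{G_n}=\frac{1}{n}\sum_i\delta_{\lambda_i}$ with the Perron eigenvalue removed.

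A standard moment argument shows that if $g(G_n)\to\infty$, then $\mu_{G_n}$ converges weakly to the Kesten--McKay measure
$$d\mu_d(\lambda)=\frac{d}{2\pi}\cdot\frac{\sqrt{4(d-1)-\lambda^2}}{d^2-\lambda^2}\,d\lambda,\qquad\lambda\in\bigl[-2\sqrt{d-1},\,2\sqrt{d-1}\bigr],$$
since $\mathrm{tr}(A_{G_n}^k)/n$ equals the average number of closed walks of length $k$ per vertex, and once $g(G_n)>k$ every such walk lifts to the universal cover $T_d$ whose return-walk counts are precisely the moments of $\mu_d$. The same $2k$-th moment computation (with $k$ growing slowly with $n$) yields $\lambda_2(G_n)\leq 2\sqrt{d-1}+o(1)$, so for large $n$ all eigenvalues $\lambda_2,\dots,\lambda_n$ lie in a compact set on which $\lambda\mapsto\log(d-\lambda)$ is bounded and continuous. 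Weak convergence then gives
$$\lim_{n\to\infty}\frac{1}{v(G_n)}\log\tau(G_n)=\int_{-2\sqrt{d-1}}^{2\sqrt{d-1}}\log(d-\lambda)\,d\mu_d(\lambda).$$

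The remaining step is to evaluate this integral. Substituting $\lambda=2\sqrt{d-1}\cos\theta$ converts it into an integral over $[0,\pi]$ whose integrand, after partial-fraction decomposition of the rational factor $d/(d^2-4(d-1)\cos^2\theta)$, splits into pieces each amenable to the classical identity $\int_0^\pi\log(a-b\cos\theta)\,d\theta=\pi\log\bigl((a+\sqrt{a^2-b^2})/2\bigr)$; the bookkeeping yields exactly $\log\bigl((d-1)^{d-1}/(d^2-2d)^{d/2-1}\bigr)$. The main obstacle lies in the second step rather than in this computation: because $\log(d-\lambda)$ blows up at $\lambda=d$, weak convergence alone does not give convergence of the integrals. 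Peeling off the Perron eigenvalue $\lambda_1=d$ is routine (it was already absorbed into the vanishing $-\log n/n$ term above), but ruling out that other eigenvalues approach $d$ is precisely where the large-girth hypothesis is essential, via the uniform bound $\lambda_2(G_n)\leq 2\sqrt{d-1}+o(1)$.
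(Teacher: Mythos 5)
You take the classical route through the Matrix--Tree theorem and the adjacency spectrum, which is genuinely different from the paper's. The paper never touches adjacency eigenvalues: it relates forests (and pseudo-forests) to a matching-polynomial transform $R_G$, precisely so that the Heilmann--Lieb theorem confines the relevant zeros to the open interval $(-2\sqrt{d-1},2\sqrt{d-1})$, where the integrand $\ln(u-t)$ is bounded and continuous. In your version the role of those zeros is played by $\lambda_2,\dots,\lambda_n$, which enjoy no such confinement, and that is exactly where the argument breaks.

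The claim that a $2k$-th moment computation with slowly growing $k$ yields $\lambda_2(G_n)\le 2\sqrt{d-1}+o(1)$ is false. The quantity $\mathrm{tr}(A^{2k})/n$ is insensitive to $o(n)$, indeed even $O(1)$, outlier eigenvalues, so such an argument can at best bound the \emph{number} of eigenvalues above $2\sqrt{d-1}+\varepsilon$, never pin down the location of $\lambda_2$. Worse, high girth is fully compatible with $\lambda_2\to d$: take two copies of a $d$-regular high-girth graph on $n/2$ vertices, delete one edge from each, and add two new edges across the copies to restore $d$-regularity and connectivity. Any new cycle must traverse both crossing edges and has length at least $2g-O(1)$, so the girth stays large; but the edge cut of size $2$ gives Cheeger constant $O(1/n)$, hence $d-\lambda_2=O(1/n)$ by the easy direction of Cheeger's inequality. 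So there is no compact subset of $(-d,d)$ containing $\lambda_2,\dots,\lambda_n$ on which $\log(d-\lambda)$ is bounded.

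This leaves the key step unproven. The $\limsup$ direction of your final limit is still easy: replace $\log(d-\lambda)$ by its truncation $\max(\log(d-\lambda),-T)$, apply Portmanteau, then let $T\to\infty$ by monotone convergence. But the $\liminf$ direction, namely controlling the negative contribution $\frac1n\sum_{i\ge2}\log(d-\lambda_i)\mathbf{1}_{\lambda_i>2\sqrt{d-1}+\varepsilon}$ with no pointwise lower bound on $d-\lambda_i$, is the actual content of Lyons' proof (and of McKay's analysis in the random setting), and it is not the ``routine'' peeling you describe. The paper's detour through $R_G$ and the matching polynomial exists precisely to avoid this difficulty, trading the unconfined adjacency spectrum for the confined matching roots.
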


Lyons actually proved a more general result on Benjamini--Schramm convergent graph sequences.

Our main theorem is an analogue of these theorems for evaluations of the Tutte polynomial for a wide range of parameters.

\begin{Th} \label{tutte-limit}
Let $x\geq 1$ and $0\leq y\leq 1$. Let $d\geq 2$, and let $(G_n)_n$ be a sequence of $d$-regular graphs such that  $\lim_{n\to \infty} g(G_n)=\infty$. Then
$$\lim_{n\to \infty}T_{G_n}(x,y)^{1/v(G_n)}=t_d(x,y),$$
where
$$t_d(x,y)=\left\{\begin{array}{lc} (d-1)\left(\frac{(d-1)^2}{(d-1)^2-x}\right)^{d/2-1}&\ \ \mbox{if}\ x\leq d-1,\\
x\left(1+\frac{1}{x-1}\right)^{d/2-1} &\ \ \mbox{if}\ x> d-1.
\end{array}\right.$$
independently of $y$ if $0\leq y\leq 1$.
If $(G_n)_n$ is a sequence of random $d$-regular graphs, then the same statement holds true asymptotically almost surely. In fact, if $L(G,g)$ denotes the number of cycles of length at most $g-1$ in a graph $G$, then the same conclusion holds if for every fixed $g$ we have $\lim_{n\to \infty}\frac{L(G_n,g)}{v(G_n)}=0$.
\end{Th}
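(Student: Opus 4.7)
My plan is to sandwich $T_{G_n}(x,y)$ between quantities of the same exponential order $t_d(x,y)^{v(G_n)}$. The key starting observation is that the Tutte polynomial has non-negative coefficients in $x$ and $y$, so for $x \geq 0$ and $0 \leq y \leq 1$,
$$T_G(x, 0) \leq T_G(x, y) \leq T_G(x, 1).$$
Since $t_d(x,y)$ does not depend on $y$ in the stated range, it is enough to show that both endpoints, raised to the power $1/v(G_n)$, converge to $t_d(x, 1)$.

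\emph{Upper bound.} I would write $T_G(x, 1) = \sum_F (x - 1)^{k(F) - k(G)}$ as a weighted sum over spanning forests and relate it (up to subexponential factors) to $\det((x-1)I + L_G)$ via a matrix-forest identity of Chebotarev--Shamis type, where $L_G$ is the combinatorial Laplacian. Under the hypothesis that $L(G_n, g)/v(G_n) \to 0$ for every fixed $g$, the method of moments on closed walks forces the empirical spectral measure of $L_{G_n}$ to converge weakly to the Kesten--McKay measure $\mu_d$ on $[d - 2\sqrt{d-1}, d + 2\sqrt{d-1}]$. Passing $\tfrac{1}{v(G_n)} \log$ through this limit (modulo uniform-integrability checks near the edge of the spectrum, an issue only in borderline cases such as $d = 2$) gives
$$\lim_{n \to \infty} \frac{1}{v(G_n)} \log T_{G_n}(x, 1) = \int \log(x - 1 + \lambda)\, d\mu_d(\lambda),$$
which evaluates in closed form to $\log t_d(x, 1)$. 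The two regimes in the definition of $t_d$ arise from the two possible closed forms of this Stieltjes-type integral, continuously matching at $x = d - 1$.

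\emph{Lower bound.} Here I would use the chromatic polynomial identity $T_G(x, 0) = (-1)^{v(G) - k(G)} (x-1)^{-k(G)} \chi(G, 1 - x)$, and show that
$$\lim_{n\to \infty} \frac{1}{v(G_n)} \log |\chi(G_n, 1 - x)| = \log\bigl((x-1)\, t_d(x, 1)\bigr).$$
The main obstacle is concentrated here: $\chi(G, q)$ at the negative real argument $q = 1-x$ is an alternating signed sum (via the Whitney / broken-circuit expansion), so extracting its precise exponential rate is significantly subtler than handling the spanning-forest upper bound. I would attempt this via a Benjamini--Schramm local-convergence argument, expressing $v(G_n)^{-1} \log|\chi(G_n, q)|$ as an integral against the BS-continuous root distribution of the chromatic polynomial and matching the tree-side value through the recursion governing the chromatic polynomial on the $d$-regular tree. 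Once this matching is established, the independence of $y$ in the limit follows immediately from the sandwich.
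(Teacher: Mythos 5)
Your sandwich $T_G(x,0)\le T_G(x,y)\le T_G(x,1)$ is exactly the paper's starting point for the $y$-dependence, so the overall structure is sound. The fatal gap is in the upper bound: the Chebotarev--Shamis matrix-forest identity does \emph{not} relate $T_G(x,1)$ to $\det((x-1)I+L_G)$ up to subexponential factors. Expanding the determinant via the all-minors matrix-tree theorem one gets
$$\det\bigl((x-1)I+L_G\bigr)=\sum_{F\ \mathrm{spanning\ forest}}(x-1)^{k(F)}\prod_{i}|V(T_i)|,$$
where the product ranges over the tree components $T_i$ of $F$. This extra weight $\prod_i|V(T_i)|$ is \emph{exponentially} large in $v(G)$ for the dominant forests (by AM--GM it is as large as $e^{v(G)/e}$, and for large-girth regular graphs the dominant contribution really does have linearly many bounded-size components), so the determinant counts a genuinely different quantity than $\sum_F(x-1)^{k(F)}$. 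A concrete check: for $d=3$, $x=2$ one has $t_3(2)=2\sqrt{2}\approx 2.83$, whereas $\exp\bigl(\int\log(1+\lambda)\,d\mu_{\mathrm{Lap}}(\lambda)\bigr)=\exp\bigl(\int\log(4-z)\,d\rho_{\mathrm{KM}}(z)\bigr)\approx 3.57$ (computable by McKay's Lemma~3.5). The correct Kesten--McKay integrand in Lemma~\ref{integral evaluation} is $\log\bigl(\tfrac{d+z}{\sqrt{z+1}}-t\bigr)$, not $\log((d+z)-t)$; the $\sqrt{z+1}$ rescaling is essential and comes from the \emph{matching} polynomial, not the Laplacian. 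The paper's way around this is to introduce $R_G(z)$, which for $d$-regular $G$ equals $z^{v(G)/2}\mu_G\bigl(\tfrac{d-1+z}{\sqrt z}\bigr)$ and simultaneously counts \emph{pseudo-forests} weighted by $2^{c(A)}$ (Corollary~\ref{R-pseudo-forests}); since a pseudo-forest of a large-girth graph has few cycles, that overcount is only $(1+O(1/g))^{v(G)}$ (Lemma~\ref{comparison}), which really is subexponential.

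The lower bound plan is also too vague to carry the weight you put on it: a "BS-continuous root distribution of the chromatic polynomial" is not a known object in the needed generality, and the whole difficulty of $T_G(x,0)$ is precisely that alternating signs can cause massive cancellation that local weak convergence alone does not control. The paper avoids spectral methods here entirely: it uses Whitney's broken-circuit expansion to write $z^{k(G)}T_G(z+1,0)=\sum_k c_k z^{n-k}$ with $c_k$ counting broken-circuit-free edge sets, and then proves $c_k\ge(2/3)^L(1-1/g)^{m}f_k$ via an averaging-over-orderings argument plus the FKG inequality (Lemma~\ref{lem: tress vs bcf trees}). Since $c_k\le f_k$ trivially, this gives $T_{G_n}(x,0)$ and $T_{G_n}(x,1)$ the same normalized limit directly, with no separate evaluation of $T_G(x,0)$ needed. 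If you want to salvage a spectral route to the upper bound you would need to replace the Laplacian determinant by the matching polynomial with the scaling dictated by $R_G$ — the Heilmann--Lieb theorem then confines the zeros to $(-2\sqrt{d-1},2\sqrt{d-1})$ and makes the $\log$ integrand bounded, which is what actually justifies passing $\tfrac{1}{v(G_n)}\log$ through the weak limit.
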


Note that the case $x=1, y=1$ covers Theorems~\ref{tree-limit1} and \ref{tree-limit}. In case of $d=2$, one needs to define $t_2(1)=1$. Note that for cycle $C_n$ on $n$ vertices we have $T_{C_n}(x,1)=\frac{x^n-1}{x-1}$ which shows that $\lim_{n\to \infty}T_{C_n}(x,1)^{1/n}=t_2(x)$ even if $0\leq x<1$. We believe that that the statement of Theorem~\ref{tutte-limit} is true for every $d$ when $0\leq x<1$  and $0\leq y\leq 1$ except $x=y=0$, but our proof does not work in this case. Naturally, we can introduce the function $t_d(x,y)$ for every $x$ and $y$ the way it is defined in Theorem~\ref{tutte-limit}, but it is not a priori clear that it exists. The authors conjecture that it exists if $x,y> 0$ and if the graphs $G_n$ do not contain loops and bridges, then we can also allow $x=0$ or $y=0$. Let us mention that $t_d(x,y)$ will depend on $y$ if $y$ is large enough in terms of $x$. 
\bigskip

For the number of spanning forests ($F(G)=T_G(2,1)$) and acyclic orientations ($a(G)=T_G(2,0)$)  we immediately get the following statement.

\begin{Th} \label{forest-limit}
Let $(G_n)_n$ be a sequence of $d$-regular graphs such that  $\lim_{n\to \infty} g(G_n)=\infty$. Let $F(G)$ denote the number of spanning forests of the graph $G$. Similarly, let $a(G)$ denote the number of acyclic orientations of the graph $G$. Then
$$\lim_{n\to \infty}F(G_n)^{1/v(G_n)}=\lim_{n\to \infty}a(G_n)^{1/v(G_n)}=\frac{(d-1)^{d-1}}{(d^2-2d-1)^{d/2-1}}.$$
If $(G_n)_n$ is a sequence of random $d$-regular graphs, then the same statement holds true asymptotically almost surely.  In fact, if $L(G,g)$ denotes the number of cycles of length at most $g-1$ in a graph $G$, then the same conclusion holds if for every fixed $g$ we have $\lim_{n\to \infty}\frac{L(G_n,g)}{v(G_n)}=0$.
\end{Th}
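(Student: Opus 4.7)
The plan is to obtain Theorem \ref{forest-limit} as an immediate corollary of Theorem \ref{tutte-limit}. By definition of the Tutte polynomial one has $F(G) = T_G(2,1)$ (the number of spanning forests) and $a(G) = T_G(2,0)$ (the number of acyclic orientations, by Stanley's classical identity). Both evaluation points satisfy $x = 2 \geq 1$ and $y \in \{0,1\} \subseteq [0,1]$, placing them squarely inside the region covered by Theorem \ref{tutte-limit}. Whether we assume $g(G_n) \to \infty$, random $d$-regularity, or merely the weaker cycle-count condition $L(G_n,g)/v(G_n) \to 0$, the same hypotheses feed into Theorem \ref{tutte-limit} and yield the two limits.

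Concretely, Theorem \ref{tutte-limit} gives $\lim_{n\to\infty} T_{G_n}(2,y)^{1/v(G_n)} = t_d(2,y)$ for $y \in \{0,1\}$, and crucially the stated value of $t_d(x,y)$ is independent of $y$ on $[0,1]$. Hence $F(G_n)^{1/v(G_n)}$ and $a(G_n)^{1/v(G_n)}$ must converge to the same common limit $t_d(2,y)$. The only remaining step is an algebraic simplification: for $d \geq 3$ we have $2 \leq d-1$, so the first branch in the definition of $t_d$ applies, and using the identity $(d-1)^2 - 2 = d^2 - 2d - 1$ one computes
$$t_d(2,y) \;=\; (d-1)\left(\frac{(d-1)^2}{(d-1)^2 - 2}\right)^{d/2 - 1} \;=\; (d-1)\cdot\frac{(d-1)^{d-2}}{(d^2-2d-1)^{d/2-1}} \;=\; \frac{(d-1)^{d-1}}{(d^2-2d-1)^{d/2-1}},$$
which matches the expression claimed in Theorem \ref{forest-limit}.

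There is no genuine obstacle here: once Theorem \ref{tutte-limit} is in hand, Theorem \ref{forest-limit} is essentially a one-line corollary plus the short computation above. The only subtle point worth emphasising in writing is the role of the $y$-independence in Theorem \ref{tutte-limit}, because that is exactly what forces the exponential growth rates of $F(G_n)$ and $a(G_n)$ to coincide, even though these two quantities have very different combinatorial meanings. One may also wish to remark that the $d = 2$ case is genuinely exceptional (the cycle $C_n$ gives $F(C_n) = 2^n - 1$ and $t_2(2,y) = 2$, not the formula above), which is consistent with $2 > d - 1 = 1$ placing $d = 2$ in the second branch of $t_d$.
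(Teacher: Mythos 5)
Your proof is correct and matches the paper's approach exactly: the authors also obtain Theorem~\ref{forest-limit} by plugging $x=2$, $y\in\{0,1\}$ into Theorem~\ref{tutte-limit}, using the $y$-independence of $t_d$ on $[0,1]$, and simplifying $(d-1)\bigl((d-1)^2/((d-1)^2-2)\bigr)^{d/2-1}$ via $(d-1)^2-2=d^2-2d-1$. Your remark about $d=2$ is a legitimate catch: the displayed formula $\frac{(d-1)^{d-1}}{(d^2-2d-1)^{d/2-1}}$ comes from the first branch of $t_d$ and is only valid when $2\le d-1$, i.e.\ $d\ge 3$, whereas for $d=2$ the correct limit is $t_2(2)=2$ from the second branch, so the theorem statement implicitly assumes $d\ge 3$.
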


We note that the special case of Theorem~\ref{forest-limit} ,when $d=3$, was previously proved by Borb\'enyi, Csikv\'ari and Luo \cite{borbenyi2020number}.
\bigskip

\textbf{Related works.} In statistical physics, there is a huge literature on the convergence of $Z_{G_n}^{1/v(G_n)}$ for various graph sequences $(G_n)_n$ and graph parameters $Z_G$. Most often, $Z_G$ is the partition function of some statistical physical model, and $(G_n)_n$ converges to the lattice $\mathbb{Z}^d$ or another Archimedean lattice. In these cases, it is generally easy to establish the convergence of $Z_{G_n}^{1/v(G_n)}$, but often hard to compute the explicit limit. In the case when $(G_n)_n$ is a sequence of large girth graphs, there is no real-life physical interpretation, but statistical physicists actively study this case, too. In this case, the limit object is the infinite $d$-regular tree, also known as the Bethe lattice. The general convergence concept that is amenable to handle such cases is called Benjamini--Schramm convergence, left convergence, or local weak convergence. In the case of the infinite $d$-regular tree, it is not obvious and in many cases not even true, that $Z_{G_n}^{1/v(G_n)}$ converges. 

Statistical physicists study the Tutte polynomial in the form
$$Z_G(q,w)=\sum_{A\subseteq E(G)}q^{k(A)}w^{|A|},$$
where $q\geq 0$ and $w\geq -1$.
This is the partition function of the random cluster model.
The connection between $T_G(x,y)$ and $Z_G(q,w)$ is
$$T_G(x,y)=(x-1)^{-k(E)}(y-1)^{-v(G)}Z_G((x-1)(y-1),y-1).$$
When $q$ is a positive integer, then $Z_G(q,w)$ is also the partition function of the Potts model that can be described as follows: let $B$ be the $q\times q$ matrix with diagonal elements $1+w$ and off-diagonal elements $1$. For a map $\sigma: V(G)\to [q]$, where $[q]=\{1,2,\dots ,q\}$, let
$$w(\sigma)=\prod_{(u,v)\in E(G)}B_{\sigma(u),\sigma(v)},$$
and
$$Z_G(q,w)=\sum_{\sigma:V(G)\to [q]}w(\sigma).$$

It is known that the random cluster model behaves differently in the regimes $0<q<1$ and $q\geq 1$, and also  the regime $q>1$ and $-1<w<0$  is different from $q>1$ and $w\geq 0$. When $q\geq 1$ and $w\geq 0$, then positive correlation inequality is available through the use of the so-called FKG-inequality \cite{fortuin1971correlation}. When $0<q<1$ and $w\geq 0$, then negative correlation is conjectured \cite{grimmett2004negative, pemantle2000towards}. 
The case $q=2$ and $w\geq 0$ is the so-called ferromagnetic Ising-model. In this case, Dembo and Montanari \cite{dembo2010ising} proved that $Z_{G_n}(2,w)^{1/v(G_n)}$ converges if $(G_n)_n$ is a sequence of $d$-regular large girth graphs. In fact, they proved a significantly more general theorem about not necessarily regular locally tree-like graphs. When $q$ is a positive integer and $w\geq 0$, we get the ferromagnetic Potts model. In this case, Dembo, Montanari and Sun \cite{dembo2013factor} proved the convergence $Z_{G_n}(q,w)^{1/v(G_n)}$ for  $d$-regular large girth graphs for every $d$ first, $q$ is a positive integer and $w\geq 0$, except when $w$  belongs to a certain interval $(w_0,w_1)$. Then, Dembo, Montanari, Sly and Sun \cite{dembo2014replica} proved the convergence of $Z_{G_n}(q,w)^{1/v(G_n)}$ for  $d$-regular large girth graphs, when $d$ is even, $q$ is a positive integer and $w\geq 0$ even if $w\in (w_0,w_1)$. Very recently, Helmuth, Jenssen and Perkins \cite{helmuth2020finite} proved the convergence of  $Z_{G_n}(q,w)^{1/v(G_n)}$ for  $d$-regular large girth graphs for large (not necessarily integer) $q$ and $w\geq 0$ with the additional hypothesis that $(G_n)_n$ satisfies some expansion condition. Ferromagnetic Potts models on random regular graphs are also studied by Galanis, \v{S}tefankovi\v{c}, Vigoda and Yang \cite{galanis2016ferromagnetic}.

When $q$ is a positive integer and $w=-1$, then $Z_G(q,-1)$ counts the number of proper colorings of the graph $G$. This case was treated by Bandyopadhyay and Gamarnik \cite{bandyopadhyay2008counting}.  They showed that if $q\geq d+1$, then for a $d$-regular large girth graph sequence $(G_n)_n$ we have
$$\lim_{n\to \infty}Z_{G_n}(q,-1)^{1/v(G_n)}=q\left(1-\frac{1}{q}\right)^{d/2}.$$
Their result was extended for integer $q\geq 2\Delta$ and $w\geq -1$ by Borgs, Chayes, Kahn and Lov\'asz \cite{borgs2013left} for general Benjamini--Schramm convergent graph sequences, where $\Delta$ is a bound on the degrees of all $G_n$ in the sequence. The result of Bandyopadhyay and Gamarnik \cite{bandyopadhyay2008counting} was extended for not necessarily integer $q\geq 8\Delta$ and $w=-1$ by Ab\'ert and Hubai \cite{abert2015benjamini} also for arbitrary  Benjamini--Schramm convergent graph sequences. Csikv\'ari and Frenkel \cite{csikvari2016benjamini} showed that the same conclusion holds true for every fixed $w\geq 0$ and $q$ sufficiently large in terms of $w$ and $\Delta$. 

Our paper is essentially about the case when $q,w\leq 0$. The above papers utilize multiple different tools ranging from analytic interpolation method to cluster expansion through establishing zero-free regions or correlation decay.  Our strategy can be regarded as a combinatorial interpolation method, where we show that for large girth graphs, $T_G(x,y)$ is not very far from another graph parameter that can be analyzed more easily.
\bigskip

\noindent \textbf{Organization of this paper.} In the next section, we introduce a polynomial that we call $R_G(z)$. In some sense, the definition of this polynomial is the main idea of this paper. The polynomial $R_G(z)$ is defined via matchings and degrees of the graph $G$, and if $G$ is regular, then it simplifies to a transformation of the matching polynomial. This means that we can build on the extensive theory of matching polynomials. On the other hand, $R_G(z)$ turns out to be related to a weighted count of pseudo-forests (Corollary~\ref{R-pseudo-forests}). This makes it especially easy to compare it with $F_G(z):=z^{k(G)}T_G(z+1,1)$ that counts forests, see Section~\ref{pseudo to forests}. In Section~\ref{sec: matching polynomial}, we collected all the necessary information about the matching polynomial that we will use to prove Theorem~\ref{tutte-limit}. In Section~\ref{sec: forest-limit 1}, we prove  Theorem~\ref{tutte-limit} for the case $y=1$. Finally, in Section~\ref{sec: forest-limit general}, we prove Theorem~\ref{tutte-limit} for the case $0\leq y<1$.

\section{The polynomial $R_G(z)$}

In this section, we study a polynomial that is related to the matching polynomial of a graph $G$. Recall that a set of edges $M$ is a matching if no two edges of $M$ have common end vertices. A $k$-matching is simply a matching of size $k$.

\begin{Def}
Let
$$R_G(z)=\sum_{M\in \mathcal{M}(G)}(-z)^{|M|}\prod_{v\notin V(M)}(z+d_v-1),$$
where $d_v$ is the degree of the vertex $v$, and $\mathcal{M}(G)$ is the set of matchings of $G$ including the empty one.
\end{Def}

\begin{Rem}
Mohammadian introduced a polynomial \cite{mohammadian2019laplacian,wan2021location} that he calls the Laplacian matching polynomial defined as follows
$$\mathcal{LM}_G(z)=\sum_{M\in \mathcal{M}(G)}(-1)^{|M|}\prod_{v\notin V(M)}(z-d_v).$$
There is s striking resemblance between $R_G(z)$ and $\mathcal{LM}_G(z)$, but the Laplacian matching polynomial behaves analytically nicer. For instance, Mohammadian proved \cite{mohammadian2019laplacian} that  the Laplacian matching polynomial has only real zeros. The polynomial $R_G(z)$ does not have such nice properties, it was specifically designed to attack the problem considered in this paper.
\end{Rem}

In what follows, we first study $R_G(z)$ from the perspective of a model that we call half-edge model that is inspired by the monomer-dimer model of statistical physics. This perspective will enable us to rewrite $R_G(z+1)$ as a weighted sum of  pseudo-forests.

\subsection{Half-edge model}

In this part, we introduce the half-edge model.
 A half-edge configuration is a configuration of edges and half-edges of the graph $G$ such that each vertex of $G$ is incident to at most one edge or half-edge. For such a configuration $C$, let $C_0$ be the number of edges of $G$, where no half edge is chosen, $C_1$ is the number of edges, where exactly one half is chosen, and $C_2$ where both halves are chosen (in other words, the edge is chosen). Let
$$M_G(a_0,a_1,a_2)=\sum_Ca_0^{C_0}a_1^{C_1}a_2^{C_2}.$$

\begin{figure}[h!]
    \centering
    \begin{tikzpicture}
    \node[vertex] (u1) at (1,0) {};
    \node[vertex] (u2) at (3,0) {};
    \node[vertex] (u3) at (4,1.25) {};
    \node[vertex] (u4) at (3,2.5) {};
    \node[vertex] (u5) at (1,2.5) {};
    \node[vertex] (u6) at (0,1.25) {};
    \node[point] (v1) at (0.5,1.875) {};
    \node[point] (v2) at (3.5,1.875) {};
    \node[point] (v3) at (2,2.5) {};
    
    \draw (u1) -- (u2);
	\draw (u2) -- (u3);
	\draw (u3) -- (u4);
	\draw (u4) -- (u5);
	\draw (u5) -- (u6);
	\draw (u6) -- (u1);
	\draw[line width=3pt] (u1) -- (u2);
	\draw[line width=3pt] (u6) -- (v1);
	\draw[line width=3pt] (u5) -- (v3);
	\draw[line width=3pt] (u3) -- (v2);

    \end{tikzpicture}
    \caption{A $6$-cycle with a half-edge configuration contributing a term $a_0^2a_1^3a_2$. }
    \label{fig:my_label}
\end{figure}
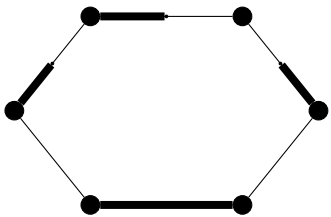

\begin{Th} \label{half-edge model} For a graph $G$ we have
$$M_G(a_0,a_1,a_2)=a_0^{|E|-n}\sum_{M\in \mathcal{M}(G)}(a_0a_2-a_1^2)^{|M|}\prod_{v\notin V(M)}(a_0+d_va_1).$$
\end{Th}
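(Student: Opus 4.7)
The plan is to expand $M_G$ as a sum over per-vertex choices, group configurations by which edges end up fully covered (these always form a matching), handle the resulting overcounting by inclusion--exclusion, and collapse the remaining double sum via the binomial theorem.

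First I would encode half-edge configurations by vertex maps $s\colon V \to \{\emptyset\} \cup \{e : v \in e\}$: the value $s(v)=e$ means the half of $e$ at $v$ is selected, and $s(v)=\emptyset$ means no half is incident to $v$. The vertex constraint in the definition makes this a bijection onto valid configurations, so
\[ M_G(a_0,a_1,a_2) = \sum_s \prod_{e=uv\in E} w_e(s(u),s(v)), \]
where $w_e$ equals $a_2$, $a_1$ or $a_0$ according to whether both, exactly one, or neither of $s(u),s(v)$ equals $e$. I then partition by $M(s) = \{e=uv : s(u)=s(v)=e\}$, which is automatically a matching. Fixing $M$, the map $s$ is forced on $V(M)$; on $V\setminus V(M)$ it ranges freely subject to the constraint that no non-$M$ edge $e=uv$ with $u,v\notin V(M)$ has $s(u)=s(v)=e$ (otherwise $M(s)$ would strictly contain $M$). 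With $r = a_1/a_0$ and $K(s) = |\{v \notin V(M) : s(v) \neq \emptyset\}|$, a short bookkeeping step shows that each non-$M$ edge contributes $a_0 \cdot r^{[s(u)=e]+[s(v)=e]}$, giving a total contribution $a_0^{|E|-|M|} r^{K(s)}$ alongside $a_2^{|M|}$ from the $M$-edges.

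The crucial step is to enforce the admissibility constraint by inclusion--exclusion over the forbidden events $B_e = \{s(u)=s(v)=e\}$. Only subsets $F$ that are matchings vertex-disjoint from $V(M)$ give a nonzero intersection $\bigcap_{e\in F} B_e$ (since each such intersection forces $s$ on $V(F)$ uniquely), yielding
\[ \sum_{s:\,M(s)=M} r^{K(s)} \;=\; \sum_{F} (-1)^{|F|}\, r^{2|F|} \prod_{v \notin V(M)\cup V(F)} (1 + d_v r). \]
The final move is to re-index by $M' = M \cup F$, so that $M$ runs over sub-matchings of a fixed matching $M'$ and $F = M' \setminus M$. The inner sum collapses by the binomial theorem to $(a_2/a_0 - r^2)^{|M'|} = (a_0 a_2 - a_1^2)^{|M'|}/a_0^{2|M'|}$, and rewriting $(1+d_v r) = (a_0+d_v a_1)/a_0$ and collecting the $a_0$ powers produces the right-hand side exactly.

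The main obstacle, as I see it, is setting up the inclusion--exclusion correctly---in particular, verifying that the admissible $F$'s are precisely matchings vertex-disjoint from $V(M)$, and that each vertex of $V(F)$ is then forced to have $s(v)\ne\emptyset$, supplying the factor $r^{2|F|}$---together with careful bookkeeping of $a_0$ powers through the re-indexing. Since both sides are polynomials in $(a_0,a_1,a_2)$, the temporary division by $a_0$ used in the proof can be removed by polynomial continuity.
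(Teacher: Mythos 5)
Your proof is correct, and it takes a genuinely different route from the paper. The paper proves Theorem~\ref{half-edge model} by recasting the half-edge model as a normal factor graph on the subdivision $\mathrm{Sub}(G)$ and applying a gauge transformation (Chertkov--Chernyak / Valiant holographic reduction) with explicit $2\times 2$ matrices $G_1,G_2$; the transformed local functions $\widehat{f_v},\widehat{f_e}$ then make the matching sum appear directly. You instead give a self-contained combinatorial sieving argument: encode configurations by per-vertex choices, stratify by the (automatically matching) set $M(s)$ of fully covered edges, enforce maximality of $M(s)$ by inclusion--exclusion over the events $B_e$, and then collapse the double sum via the re-indexing $M'=M\cup F$ and the binomial identity $(a_2/a_0 - r^2)^{|M'|}$. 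I checked the steps: the forced values on $V(M)$, the fact that nonempty intersections $\bigcap_{e\in F}B_e$ correspond exactly to matchings $F$ disjoint from $V(M)$, the factor $r^{2|F|}$ from the forced half-edges on $V(F)$, and the final power count $a_0^{|E|}\cdot a_0^{-2|M'|}\cdot a_0^{-(n-2|M'|)}=a_0^{|E|-n}$ all work out. The paper's gauge-theoretic proof has the advantage of placing the identity inside a general framework that the authors also want to advertise; your proof has the advantage of being entirely elementary and making the mechanism transparent --- one sees explicitly that the gauge transformation amounts to the re-grouping $M'=M\cup F$ together with a binomial collapse. Your closing remark that the temporary division by $a_0$ is harmless by polynomial continuity is the right way to handle the $a_0=0$ degeneracy.
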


\begin{proof} See the next subsection on gauge transformation.
\end{proof}

An immediate corollary is the following.

\begin{Cor}
We have
$$M_G(z,1,-1)=z^{|E|-n}R_G(z+1).$$
\end{Cor}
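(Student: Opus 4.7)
The plan is to obtain the corollary as a direct specialization of Theorem~\ref{half-edge model}. The key observation is that the two building blocks of the formula in Theorem~\ref{half-edge model} collapse to exactly the expressions defining $R_G(z+1)$ once we set $(a_0,a_1,a_2)=(z,1,-1)$.

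First, I would simply substitute $a_0=z$, $a_1=1$, $a_2=-1$ into the identity
$$M_G(a_0,a_1,a_2)=a_0^{|E|-n}\sum_{M\in \mathcal{M}(G)}(a_0a_2-a_1^2)^{|M|}\prod_{v\notin V(M)}(a_0+d_va_1).$$
The prefactor becomes $a_0^{|E|-n}=z^{|E|-n}$. The matching weight becomes $a_0a_2-a_1^2 = -z-1 = -(z+1)$, so $(a_0a_2-a_1^2)^{|M|} = (-(z+1))^{|M|} = (-(z+1))^{|M|}$. Each unmatched-vertex factor becomes $a_0+d_va_1 = z+d_v = (z+1)+d_v-1$.

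Putting these together yields
$$M_G(z,1,-1)=z^{|E|-n}\sum_{M\in \mathcal{M}(G)}(-(z+1))^{|M|}\prod_{v\notin V(M)}\bigl((z+1)+d_v-1\bigr),$$
and the sum on the right is precisely $R_G(z+1)$ by the definition of $R_G$ applied at the point $z+1$ in place of the variable. Hence $M_G(z,1,-1)=z^{|E|-n}R_G(z+1)$, as claimed.

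There is no real obstacle here: the content of the corollary is entirely in Theorem~\ref{half-edge model}, and the corollary is just the observation that the specialization $(a_0,a_1,a_2)=(z,1,-1)$ makes the two factors $a_0a_2-a_1^2$ and $a_0+d_va_1$ match the matching-weight $-(z+1)$ and the unmatched-vertex weight $(z+1)+d_v-1$ appearing in $R_G(z+1)$. The only thing worth flagging is the sign/shift bookkeeping in identifying $-(z+1)$ with the factor $(-z)$ used in $R_G$ when the variable is renamed from $z$ to $z+1$.
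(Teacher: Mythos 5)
Correct, and this is exactly the intended (and only natural) argument: the paper states the corollary as immediate from Theorem~\ref{half-edge model}, and your substitution $(a_0,a_1,a_2)=(z,1,-1)$ with the bookkeeping $a_0a_2-a_1^2=-(z+1)$ and $a_0+d_va_1=(z+1)+d_v-1$ is precisely that verification.
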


Next, we prove an alternative description of $M_G(z,1,-1)$.

\begin{Def}
For an $A\subseteq E(G)$ we say that it is pseudo-forest if each of its connected components are forests or unicyclic graphs. Let $\mathcal{PF}$ be the sets of pseudo-forests. For a pseudo-forest $A$ let $c(A)$ be the number of cycles in $A$. 
\end{Def}

\begin{Lemma}
We have
$$M_G(z,1,-1)=z^{|E|-n}\sum_{k=0}^n\left(\sum_{A\in \mathcal{PF} \atop |A|=k}2^{c(A)}\right)z^{n-k}.$$
\end{Lemma}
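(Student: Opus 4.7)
The plan is to interpret $M_G(z,1,-1)$ combinatorially by expanding the half-edge model, grouping configurations by their set of occupied edges, and doing a signed count component by component.

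First I would re-parameterize half-edge configurations as functions $f:V\to V\cup\{\bot\}$ satisfying $f(v)\in N(v)\cup\{\bot\}$, where $f(v)=u$ records that the half of the edge $vu$ at $v$ is selected, and $f(v)=\bot$ records that no half at $v$ is selected. Under this bijection an edge $e=uv$ lies in $C_0$ iff neither of $f(u)=v,\,f(v)=u$ holds, in $C_1$ iff exactly one holds, and in $C_2$ iff both hold, so the weight of $f$ under $(a_0,a_1,a_2)=(z,1,-1)$ is $z^{C_0(f)}(-1)^{C_2(f)}$. It is convenient to form the directed multigraph $D_f$ with arcs $v\to f(v)$ (one per $v$ with $f(v)\ne\bot$), let $A_f\subseteq E$ be the undirected set of occupied edges, and let $M_f\subseteq A_f$ be the ``matching'' part consisting of the edges carrying both arcs (i.e.\ $uv$ with $f(u)=v$ and $f(v)=u$).

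Next I would show that $A_f$ is always a pseudo-forest. For a connected component $C$ of $A_f$ with vertices $V_C$, edges $E_C$, matching part $M_C$, and $r_C=|\{v\in V_C:f(v)=\bot\}|$, arc-counting gives $|V_C|-r_C=|E_C|+|M_C|$; together with $|E_C|\ge |V_C|-1$ this forces $r_C+|M_C|\le 1$. Hence each component falls into exactly one of three types: (a) $|M_C|=0,\ r_C=1$, so $C$ is a tree with a distinguished sink; (a$'$) $|M_C|=0,\ r_C=0$, so $C$ is unicyclic with a cycle of length at least $3$; or (b) $|M_C|=1,\ r_C=0$, so $C$ is a tree with a distinguished matching edge $e_0$. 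In particular $A_f\in\mathcal{PF}$.

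Now, grouping by $A=A_f$ factors out $z^{|E|-|A|}$, and I would evaluate $\sum_{f:\,A_f=A}(-1)^{|M_f|}$ as a product over the connected components of $A$. For a tree component $T$ on $k$ vertices: configurations of type (a) correspond to the $k$ choices of sink, with the remaining arcs then forced to orient every edge toward the sink (contribution $+1$ each); configurations of type (b) correspond to the $k-1$ choices of $e_0\in E(T)$, after which the two subtrees of $T-e_0$ are forced to orient toward their respective endpoints of $e_0$ (contribution $-1$ each); these give a net $k-(k-1)=1$. For a unicyclic component $U$ only type (a$'$) occurs, contributing $+1$ for each of the $2$ cyclic orientations of the cycle (trees attached to the cycle are then forced inward), a net of $2$. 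Taking the product yields $2^{c(A)}$, so
$$M_G(z,1,-1)=\sum_{A\in\mathcal{PF}}2^{c(A)}z^{|E|-|A|}=z^{|E|-n}\sum_{k=0}^{n}\Bigl(\sum_{A\in\mathcal{PF},\,|A|=k}2^{c(A)}\Bigr)z^{n-k},$$
which is the claim. The main obstacle is the uniqueness step in cases (a) and (b): once a sink or a matching edge is fixed in a tree component, one must check inductively (starting from the leaves) that every other edge has a forced direction, because any edge oriented ``away'' from the sink or from $e_0$ would propagate along the unique path and eventually force a second outgoing arc at some vertex, violating the out-degree-one constraint or creating a spurious element of $M_f$.
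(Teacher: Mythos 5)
Your proof is correct and takes essentially the same approach as the paper: both analyze the occupied edge set $A$ component by component, use the bound $|V(T)|-1 \le |E(T)| \le |V(T)|$ to conclude $A$ is a pseudo-forest, and compute the signed count over configurations by multiplying a local contribution of $1$ per tree component (from $k$ sinks minus $k-1$ matching edges) and $2$ per unicyclic component. Your reformulation via functions $f:V\to V\cup\{\bot\}$ and the arc-counting identity $|V_C|-r_C=|E_C|+|M_C|$ is just a slightly more formal packaging of the paper's argument.
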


\begin{proof} Let us fix a half-edge configuration. Let $A$ be the subset of the edges whose at least one half-edge is chosen. Let us consider a connected component of $A$, let it be $T$ and $V(T)$ be the vertices covered by $T$. Since each vertex of $T$ has at most one half-edge, we get that $|T|\leq |V(T)|$. On the other hand, $T$ is connected and so $|T|\geq |V(T)|-1$. 

If $|V(T)|=|T|$, then it contains exactly one cycle and from every edge, exactly one half is chosen. Then it is easy to see that from the unique cycle one can choose every second half-edge as this corresponds to one of the two orientations of the cycle if we regard a half-edge as an orientation. Every other half-edges are uniquely determined. So such a component corresponds to exactly two half-edge configurations.

If $|V(T)|-1=|T|$, then it contains no cycle. This can happen in two different ways. One is that if one of the vertices is not covered by a half-edge, then this uniquely determines the position of the half-edges: they are oriented towards this vertex. The other is that if two halves of an edge are chosen and every other half-edges are oriented toward this edge. In the first case, the contribution is $|V(T)|$, in the second case, the contribution is $-|T|$, so altogether we get that such a component contributes $|V(T)|-|T|=1$ to the sum. 

Hence a set $A$ contributes to the sum $M_G(z,1,-1)$ if and only if it is a pseudo-forest and its contribution is $2^{c(A)}z^{m-|A|}$. Hence
$$M_G(z,1,-1)=\sum_{A\in \mathcal{PF}}2^{c(A)}x^{|E|-|A|}=z^{|E|-n}\sum_{k=0}^n\left(\sum_{A\in \mathcal{PF} \atop |A|=k}2^{c(A)}\right)z^{n-k}.$$
\end{proof}

By comparing the previous two results we get the following corollary.

\begin{Cor} \label{R-pseudo-forests}
We have
$$R_G(z+1)=\sum_{k=0}^n\left(\sum_{A\in \mathcal{PF} \atop |A|=k}2^{c(A)}\right)z^{n-k}.$$
\end{Cor}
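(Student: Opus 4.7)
The plan is to obtain this corollary as an immediate consequence of the two preceding results, by equating their right-hand sides. The corollary stated just before it identifies $M_G(z,1,-1)$ with $z^{|E|-n}R_G(z+1)$, while the lemma immediately above expresses the same quantity $M_G(z,1,-1)$ as $z^{|E|-n}\sum_{k=0}^n\bigl(\sum_{A\in\mathcal{PF},\,|A|=k}2^{c(A)}\bigr)z^{n-k}$. So the plan is simply to write down both expressions, set them equal, and divide out the common factor.

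Concretely, I would first remark that both expressions arise as genuine polynomial identities in the formal variable $z$ (the corollary is obtained from Theorem~\ref{half-edge model} by specializing the formal variables $a_0,a_1,a_2$, and the lemma is proved by a combinatorial reorganization of the half-edge sum), so that equality holds coefficient-wise in $z$. Then, since $z^{|E|-n}$ appears as a common explicit factor on both sides of the equality
\begin{equation*}
z^{|E|-n}R_G(z+1)\;=\;z^{|E|-n}\sum_{k=0}^n\Bigl(\sum_{\substack{A\in\mathcal{PF}\\|A|=k}}2^{c(A)}\Bigr)z^{n-k},
\end{equation*}
cancellation yields exactly the claim. Equivalently, one can view both sides as polynomials in $z$ and compare coefficients of $z^{|E|-k}$ for each $k$.

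There is essentially no obstacle: all the combinatorial content (the bijection between half-edge configurations and weighted pseudo-forests, and the matching-based evaluation via the gauge transformation) has already been absorbed into Theorem~\ref{half-edge model} and the preceding lemma. The only thing to double-check is the legitimacy of cancelling $z^{|E|-n}$, which is justified by treating both sides as polynomial identities valid for all $z$ rather than as evaluations at a particular nonzero value.
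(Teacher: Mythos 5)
Your proposal is exactly the paper's argument: the paper derives Corollary~\ref{R-pseudo-forests} "by comparing the previous two results," i.e., equating the two expressions for $M_G(z,1,-1)$ and cancelling the common factor $z^{|E|-n}$. Your additional remark that cancellation is legitimate because these are polynomial identities is a minor but harmless elaboration.
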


\subsection{Normal factor graphs and gauge transformations}

In this section, we prove Theorem~\ref{half-edge model}. The following language will be useful for us.

\begin{Def} A normal factor graph is a graph equipped with an alphabet $\mathcal{X}$ and  a function $f_v$ at each vertex: $\mathcal{H}=(V,E,\mathcal{X},(f_v)_{v\in V})$. At each edge $e$ there is a variable $x_e$ taking values from the alphabet $\mathcal{X}$. The partition function 
$$Z(\mathcal{H})=\sum_{\sigma\in \mathcal{X}^E}\prod_{v\in V}f_v(\sigma_{\partial v}),$$ 
where $\sigma_{\partial v}$ is the restriction of $\sigma$ to the the edges incident to the vertex $v$. 
\end{Def}

For instance, if $\mathcal{X}=\{0,1\}$ and 
$$f_v(\sigma_1,\dots \sigma_d)=\left\{ \begin{array}{cl} 1 & \mbox{if}\ \sum_{i=1}^d\sigma_i=1, \\ 0 & \mbox{otherwise}, \end{array} \right.$$
then $Z(\mathcal{H})$ is exactly the number of perfect matchings of the underlying graph. 

Let $\mathcal{H}=(V,E,(f_v)_{v\in V})$ be a normal factor graph with alphabet $\mathcal{X}$. We will show that it is possible to introduce a new normal factor graph $\widehat{\mathcal{H}}$ on the same graph with new functions $\widehat{f_v}$ such that $Z(\widehat{\mathcal{H}})=Z(\mathcal{H})$. As we will see, sometimes it will be more convenient to study the new normal factor graph $\widehat{\mathcal{H}}$. 

Let $\mathcal{Y}$ be a new alphabet, and for each edge $(u,v)\in E$ let us introduce two new matrices, $G_{uv}$ and $G_{vu}$ of size $\mathcal{Y}\times \mathcal{X}$. The new variables will be denoted by $\tau$, the old ones by $\sigma$. Let
$$\widehat{f_v}(\tau_{vu_1},\dots ,\tau_{vu_k})=\sum_{\sigma_{vu_1},\dots ,\sigma_{vu_k}}\left(\prod_{u_i \in N(v)}G_{vu_i}(\tau_{vu_i},\sigma_{vu_i})\right)f_v(\sigma_{vu_1},\dots ,\sigma_{vu_k}).$$
This way we defined the functions $\widehat{f_v}$ of $\widehat{\mathcal{H}}$. 
\medskip

This transformation is called a gauge transformation. In computer science, this method was introduced by L. Valiant under the name holographic reduction \cite{valiant2008holographic,valiant2006accidental,valiant2002quantum,valiant2002expressiveness}. In statistical physics, it was developed by M. Chertkov and V. Chernyak under the name gauge transformation \cite{chertkov2006loop2,chertkov2006loop1}. Wainwright, Jaakola, Willsky had a related idea under the name reparametrization \cite{wainwright2003tree}, but it is not easy to see the connection. In the different cases the scope was slightly different, L. Valiant used it as a reduction method for computational complexity of counting problems. This line of research was extended in a series of papers of Jin-Yi Cai and his coauthors, see Jin-Yi Cai's book \cite{cai2017complexity} and the papers \cite{cai2007holographic,cai2008basis,cai2007symmetric,cai2008holographic, cai2011holographic, cailu2008holographic} and references therein. M. Chertkov and V. Chernyak \cite{chertkov2006loop2,chertkov2006loop1} studied the so-called Bethe--approximation through gauge transformations. We simply use it as a method of proving the identity of Theorem~\ref{half-edge model}.

The following theorem is due to Chertkov and Chernyak \cite{chertkov2006loop2,chertkov2006loop1} and independently Valiant \cite{valiant2008holographic}. 

\begin{Th} If for each edge $(u,v)\in E$ we have $G^T_{uv}G_{vu}=\mathrm{Id}_{\mathcal{X}}$, then $Z(\widehat{\mathcal{H}})=Z(\mathcal{H})$. 
\end{Th}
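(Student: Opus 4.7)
The plan is to expand $Z(\widehat{\mathcal{H}})$ directly using the definition of $\widehat{f_v}$, interchange the order of summation, and then use the hypothesis $G_{uv}^T G_{vu}=\mathrm{Id}_{\mathcal{X}}$ one edge at a time to collapse the sum over $\tau$ into a product of Kronecker deltas that glue the local copies of the $\sigma$-variables into a single global configuration.

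Concretely, I would first substitute the definition of $\widehat{f_v}$ into $Z(\widehat{\mathcal{H}})=\sum_{\tau\in\mathcal{Y}^E}\prod_v\widehat{f_v}(\tau_{\partial v})$. Each vertex $v$ brings its own internal summation variable, which I will call $\sigma^{(v)}$, ranging over $\mathcal{X}^{\partial v}$. Since these are independent local variables, I can pull all of them out of the vertex product, obtaining
\[
Z(\widehat{\mathcal{H}})=\sum_{\tau}\sum_{\{\sigma^{(v)}\}_{v\in V}}\left(\prod_{v}f_v(\sigma^{(v)})\right)\prod_{v}\prod_{u\in N(v)}G_{vu}(\tau_{vu},\sigma^{(v)}_{vu}).
\]

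The key step is to reindex the double product $\prod_v\prod_{u\in N(v)}$ as a product over edges: each edge $e=(u,v)$ contributes exactly two factors, $G_{vu}(\tau_e,\sigma^{(v)}_e)$ from vertex $v$ and $G_{uv}(\tau_e,\sigma^{(u)}_e)$ from vertex $u$. Since $\tau_e$ only appears in these two factors, I can perform the sum over $\tau_e$ independently for each edge, giving
\[
\sum_{\tau_e\in\mathcal{Y}}G_{vu}(\tau_e,\sigma^{(v)}_e)G_{uv}(\tau_e,\sigma^{(u)}_e)=(G_{vu}^{T}G_{uv})(\sigma^{(v)}_e,\sigma^{(u)}_e).
\]
Transposing $G_{uv}^{T}G_{vu}=\mathrm{Id}_{\mathcal{X}}$ gives $G_{vu}^{T}G_{uv}=\mathrm{Id}_{\mathcal{X}}$ as well, so this matrix element is $\delta_{\sigma^{(v)}_e,\sigma^{(u)}_e}$.

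With this collapse, the expression becomes
\[
Z(\widehat{\mathcal{H}})=\sum_{\{\sigma^{(v)}\}_{v\in V}}\left(\prod_{v}f_v(\sigma^{(v)})\right)\prod_{e=(u,v)\in E}\delta_{\sigma^{(v)}_e,\sigma^{(u)}_e}.
\]
The product of deltas forces the two local copies of each edge variable to coincide, so the iterated sum over $\{\sigma^{(v)}\}$ collapses to a sum over a single global configuration $\sigma\in\mathcal{X}^E$ with $\sigma_e=\sigma^{(v)}_e=\sigma^{(u)}_e$, and we recover $\sum_{\sigma}\prod_v f_v(\sigma_{\partial v})=Z(\mathcal{H})$. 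There is no real obstacle here: the whole argument is a bookkeeping exercise, and the only point requiring care is to keep straight which index of the matrix $G_{vu}$ corresponds to $\tau$ and which to $\sigma$, so that the product along each edge produces $G_{vu}^{T}G_{uv}$ in the correct orientation.
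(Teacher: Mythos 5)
Your proof is correct and follows essentially the same route as the paper's: substitute the definition of $\widehat{f_v}$, regroup the matrix factors by edge, sum each $\tau_e$ separately to produce $(G^T_{uv}G_{vu})_{\sigma_{uv},\sigma_{vu}}$, and read off that the identity-matrix hypothesis forces the two local copies of each edge variable to agree. The only cosmetic differences are your explicit $\sigma^{(v)}$ notation and Kronecker-delta phrasing, and your added remark that the transposed identity $G_{vu}^TG_{uv}=\mathrm{Id}$ follows from the hypothesis, which handles the choice of edge orientation cleanly.
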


\begin{proof}
Let us start to compute 
$Z(\widehat{\mathcal{H}})=\sum_{\tau\in \mathcal{Y}^E}\prod_{v\in V}\widehat{f_v}(\tau_{\partial v})$:
$$Z(\widehat{\mathcal{H}})=\sum_{\tau\in \mathcal{Y}^E}\prod_{v\in V}\left[\sum_{\sigma_{vu_1},\dots ,\sigma_{vu_k}}\left(\prod_{u_i \in N(v)}G_{vu_i}(\tau_{vu_i},\sigma_{vu_i})\right)f_v(\sigma_{vu_1},\dots ,\sigma_{vu_k})\right].$$
If we expand it will have terms $\prod_{v\in V}f_v(\sigma_{vu_1},\dots ,\sigma_{vu_k})$ with some coefficients. A priori it can occur that these terms are incompatible in the sense that $\sigma_{uv}\neq \sigma_{vu}$. As we will see that the role of the conditions on $G_{uv}$ is exactly to ensure that if there is an edge $(u,v)\in E$ with 
$\sigma_{uv}\neq \sigma_{vu}$, then the coefficient is $0$, and if all edges are compatible, then the coefficient is $1$. Indeed, the coefficient is
$$\sum_{\tau\in \mathcal{Y}^E}\prod_{v\in V}\prod_{u_i \in N(v)}G_{vu_i}(\tau_{vu_i},\sigma_{vu_i}).$$
Note that $\tau_{uv}=\tau_{vu}$ for each edge, and this variable appears only at the vertices $u$ and $v$, and nowhere else. Hence
$$\sum_{\tau\in \mathcal{Y}^E}\prod_{v\in V}\prod_{u_i \in N(v)}G_{vu_i}(\tau_{vu_i},\sigma_{vu_i})=\prod_{(u,v)\in E}\left(\sum_{\tau_{uv}}G_{uv}(\tau_{uv},\sigma_{uv})G_{vu}(\tau_{vu},\sigma_{vu})\right)=$$
$$=\prod_{(u,v)\in E}\left(\sum_{\tau_{uv}}G^T_{uv}(\sigma_{uv},\tau_{vu})G_{vu}(\tau_{vu},\sigma_{vu})\right)=\prod_{(u,v)\in E}(G^T_{uv}G_{vu})_{\sigma_{uv},\sigma_{vu}}=
\prod_{(u,v)\in E}(\mathrm{Id})_{\sigma_{uv},\sigma_{vu}}.$$
Hence this is only non-zero if $\sigma_{uv}=\sigma_{vu}$ for each edge $(u,v)\in E(G)$, and then this coefficient is $1$.
\end{proof}

Now we are ready to prove Theorem~\ref{half-edge model}.

\begin{proof}[Proof of Theorem~\ref{half-edge model}]
First we turn the half-edge model into a normal factor graph. First let $\mathrm{Sub}(G)$ be the subdivision of the graph $G$, that is, we subdivide each edge of the graph with a new vertex. Then let $\mathcal{H}$ be the normal factor graph with underlying graph $\mathrm{Sub}(G)$, alphabet $\mathcal{X}=\{0,1\}$ and the following functions. For the original vertices of the graph let 
$$f_v(\sigma_1,\dots ,\sigma_{d_v})=\left\{ \begin{array}{cl} 1 & \mbox{if}\ \sum_{i=1}^{d_v}\sigma_i\leq 1, \\ 0 & \mbox{otherwise}. \end{array} \right.$$
For the new vertices 
$$f_e(\sigma_1,\sigma_2)=a_{\sigma_1+\sigma_2}.$$
Later it will be more convenient to work with the matrix 
$$F_e:=\left( \begin{array}{cc} a_0 & a_1 \\ a_1 & a_2 \end{array}\right).$$
Then $Z(\mathcal{H})=M_G(a_0,a_1,a_2)$.
\medskip

Next we use the gauge theory. For each edge $e=(u,v)\in E(G)$ we introduce two matrices:
$G_{eu}=G_{ev}=G_1$ and $G_{ue}=G_{ve}=G_2$, where
$$G_1:=\left( \begin{array}{cc} 1 & 0 \\ -\frac{a_1}{a_0} & 1 \end{array}\right)\ \ \ \mbox{and} \ \ \ G_2:=\left( \begin{array}{cc} 1 & \frac{a_1}{a_0} \\ 0 & 1 \end{array}\right).$$
Observe that $G_2^TG_1=\mathrm{Id}$. First let us compute $\widehat{f_e}(\tau_1,\tau_2)$:
\begin{align*}\widehat{f_e}(\tau_1,\tau_2)&=\sum_{\sigma_1,\sigma_2}G_1(\tau_1,\sigma_1)G_1(\tau_2,\sigma_2)f_e(\sigma_1,\sigma_2)\\
&=\sum_{\sigma_1,\sigma_2}G_1(\tau_1,\sigma_1)f_e(\sigma_1,\sigma_2)G^T_1(\sigma_2,\tau_2)\\
&=(G_1F_eG^T_1)_{\tau_1,\tau_2}
\end{align*}
Here we have
$$G_1F_eG^T_1=\left( \begin{array}{cc} a_0 & 0 \\ 0 & \frac{a_0a_2-a_1^2}{a_0} \end{array}\right).$$
Next let us compute $\widehat{f_v}(\tau_1,\dots ,\tau_{d_v})$, where $d_v$ is the degree of the vertex $v$. By definition
$$\widehat{f_v}(\tau_1,\dots ,\tau_{d_v})=\sum_{\sigma_1,\dots ,\sigma_{d_v}}\prod_{i=1}^{d_v}G_2(\tau_i,\sigma_i)f_v(\sigma_1,\dots ,\sigma_{d_v}).$$
In particular,
$$\widehat{f_v}(0,\dots ,0)=G_2(0,0)^{d_v}+d_vG_2(0,0)^{d_v-1}G_2(0,1)=1+d_v\frac{a_1}{a_0}=\frac{a_0+d_va_1}{a_0}$$
and 
$$\widehat{f_v}(1,0,\dots ,0)=G_2(0,0)^{d_v-1}G_2(1,1)=1,$$
and
$$\widehat{f_v}(\tau_1,\dots ,\tau_{d_v})=0$$
if $\sum_{i=1}^{d_v}\tau_i\geq 2$. To see the last two evaluations observe that since $G_2(1,0)=0$, a non-zero term implies  that if $\tau_i=1$, then $\sigma_i=1$ too. Hence in case of a non-zero term $\sum_{i=1}^{d_v}\sigma_i\geq \sum_{i=1}^{d_v}\tau_i$. Since $f_v(\sigma_1,\dots \sigma_{d_v})=0$ if $\sum_{i=1}^{d_v}\sigma_i\geq 2$ we only need to check a few terms. 

Now let us compute
$$Z(\widehat{\mathcal{H}})=\sum_{\tau\in \mathcal{Y}^{E'}}\prod_{v\in V}\widehat{f_v}(\tau_{\partial v})\prod_{e\in E(G)}\widehat{f_e}(\tau_{\partial e}).$$
Since $\widehat{f_e}(0,1)=\widehat{f_e}(1,0)=0$, the non-zero terms correspond to the edge set of the original graph $G$. Furthermore, since $\widehat{f_v}(\tau_1,\dots ,\tau_{d_v})=0$ if $\sum_{i=1}^{d_v}\tau_i\geq 2$ this edge set has to be a matching. The contribution of a matching $M$ is
$$\left(\frac{a_2a_0-a_1^2}{a_0}\right)^{|M|}a_0^{|E|-|M|}\prod_{v\notin V(M)}\frac{a_0+d_va_1}{a_0}=a_0^{|E|-n}(a_2a_0-a_1^2)^{|M|}\prod_{v\notin V(M)}(a_0+d_va_1).$$
Hence
$$Z(\widehat{\mathcal{H}})=a_0^{|E|-n}\sum_{M\in \mathcal{M}(G)}(a_0a_2-a_1^2)^{|M|}\prod_{v\notin V(M)}(a_0+d_va_1).$$
Since $Z(\mathcal{H})=Z(\widehat{\mathcal{H}})$ the claim of the theorem follows.
\end{proof}

\section{From pseudo-forests to forests} \label{pseudo to forests}

Let us introduce the polynomial
$$F_G(z)=\sum_{k=0}^nf_k(G)z^{n-k},$$
where $f_k(G)$ denotes the number of forests with exactly $k$ edges. It turns out that if $T_G(x,y)$ denotes the Tutte polynomial of the graph $G$, then
$$F_G(z)=z^{k(G)}T_G(z+1,1).$$
Since a forest is a pseudo-forest without any cycle we immediately have
$$F_G(z)\leq R_G(z+1)$$
for positive $z$. The following theorem shows that for (essentially) large girth graphs the two quantities are not two far from each other. It is known that random regular graphs contain at most $O_d(1)$ cycles of length $k$ for every fixed $k$ with very high probability as the random variable $X_k$ counting the number of $k$-cycles has asymptotically Poisson distribution with parameter $\frac{(d-1)^k}{k}$ (see \cite{mckay2004short} and the references therein).

\begin{Lemma} \label{comparison}
Let $G$ be a graph on $n$ vertices with average degree $\overline{d}$ such that it contains at most $L$ cycles of  length at most $g-1$. Then
$$\left(1+\frac{g \overline{d}}{z}\right)^{-L-n/g}R_G(z+1)\leq F_G(z)\leq R_G(z+1).$$

\end{Lemma}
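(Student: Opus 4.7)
The plan is to establish the two inequalities separately. The upper bound $F_G(z)\leq R_G(z+1)$ for $z\geq 0$ is immediate from Corollary~\ref{R-pseudo-forests}: forests are precisely the pseudo-forests with $c(A)=0$, each contributing $z^{n-|F|}$ to both sides, while the remaining pseudo-forests (with $c(A)\geq 1$) contribute additional nonnegative terms to $R_G(z+1)$. So only the lower bound requires real work.

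For the nontrivial inequality, I would introduce a canonical ``cycle-breaking'' map $\Phi\colon \mathcal{PF}(G)\to\{\text{forests of }G\}$. Fix a linear order on $E(G)$; given a pseudo-forest $A$ with cycles $C_1,\dots,C_{c(A)}$, let $\Phi(A):=A\setminus\{e_1,\dots,e_{c(A)}\}$, where $e_i$ is the smallest edge of $C_i$. Then $\Phi(A)$ is a forest with $|\Phi(A)|=|A|-c(A)$, and the substitution $z^{n-|A|}=z^{n-|\Phi(A)|}\cdot z^{-c(A)}$ yields the decomposition
\begin{equation*}
R_G(z+1)\;=\;\sum_{F\text{ forest}}z^{n-|F|}\,N(F),\qquad N(F)\;:=\;\sum_{A\in\Phi^{-1}(F)}(2/z)^{c(A)}.
\end{equation*}
It then suffices to prove $N(F)\leq (1+g\overline{d}/z)^{L+n/g}$ uniformly over forests $F$, which immediately implies $R_G(z+1)\leq (1+g\overline{d}/z)^{L+n/g}F_G(z)$ for $z\geq 0$.

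To analyze $N(F)$, I would parametrize the fiber $\Phi^{-1}(F)$ by the chord set $S:=A\setminus F\subseteq E(G)\setminus F$. Each $e\in S$ creates a fundamental cycle $C_e$ in $F\cup\{e\}$, and the requirement that $F\cup S$ be a pseudo-forest with $|S|$ cycles is equivalent to the fundamental cycles $\{C_e\}_{e\in S}$ being pairwise vertex-disjoint (which also forces each chord's endpoints into the same $F$-component). Dropping the smallest-edge canonicalization and further passing to all vertex-disjoint cycle collections of $G$ produces an $F$-independent upper bound
\begin{equation*}
N(F)\;\leq\;\sum_{\mathcal{C}}(2/z)^{|\mathcal{C}|},
\end{equation*}
where $\mathcal{C}$ ranges over vertex-disjoint cycle collections in $G$.

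The main obstacle, and the heart of the proof, is bounding this hard-core cycle partition function by $(1+g\overline{d}/z)^{L+n/g}$. My strategy is to split $\mathcal{C}$ into short (length $\leq g-1$) and long (length $\geq g$) cycles: at most $L$ short cycles can appear in any collection (since $G$ has $\leq L$ such cycles in total), and at most $n/g$ long cycles can coexist in a vertex-disjoint collection by the vertex-budget inequality $\sum_{C\in\mathcal{C}}|C|\leq n$. Each cycle contributes weight $2/z\leq g\overline{d}/z$ (after using $2\leq g\overline{d}$), and a careful counting of vertex-disjoint chord configurations, exploiting that the total edge count $|E(G)|=n\overline{d}/2$ can be decomposed as $(n/g)\cdot(g\overline{d}/2)$, reproduces the product structure $(1+g\overline{d}/z)^{L}\cdot(1+g\overline{d}/z)^{n/g}$. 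The technical subtlety is that $G$ may have many long cycles in total, but the vertex-disjointness constraint combined with the edge-count bound collapses their contribution to the single slot-count $n/g$, matching the claimed exponent exactly.
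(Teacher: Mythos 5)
The upper bound $F_G(z)\leq R_G(z+1)$ and the decomposition $R_G(z+1)=\sum_F z^{n-|F|}N(F)$ via a canonical cycle-breaking map $\Phi$ are both fine (the paper uses a non-canonical correspondence and accepts overcounting, which is also fine since it only needs an upper bound). But your proof has two problems, one a factual error and one a genuine gap.

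First, the characterization of $\Phi^{-1}(F)$ is wrong. You claim that $F\cup S$ is a pseudo-forest with $|S|$ cycles if and only if the fundamental cycles $\{C_e\}_{e\in S}$ are pairwise vertex-disjoint. This is only necessary, not sufficient. Take $F$ to be a spanning path $v_1v_2\cdots v_n$ and $S=\{(v_1,v_3),(v_5,v_7)\}$: the two fundamental cycles are vertex-disjoint, yet $F\cup S$ is a single connected component with circuit rank $2$, so it is not a pseudo-forest. The correct condition is that $S$ contains at most one chord per connected component of $F$ (which then forces vertex-disjointness automatically, since the cycles lie in distinct components). This error is not by itself fatal --- the true fiber is a subset of the set you describe, so each successive relaxation still yields an upper bound --- but it means you discard exactly the structural fact the proof needs.

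Second, and this is the real gap: the inequality $\sum_{\mathcal{C}}(2/z)^{|\mathcal{C}|}\leq (1+g\overline{d}/z)^{L+n/g}$ over all vertex-disjoint cycle collections $\mathcal{C}$ in $G$ is not established. Your short/long split correctly shows that any single collection has at most $L+n/g$ members, but it does not control \emph{how many} collections there are; the "careful counting of vertex-disjoint chord configurations ... reproduces the product structure" is not a proof, and I do not see how to make it one. Expanding $(1+g\overline{d}/z)^{L+n/g}=\sum_k\binom{L+n/g}{k}(g\overline{d}/z)^k$, the bound would require the number of vertex-disjoint $k$-cycle collections to satisfy $N_k\leq\binom{L+n/g}{k}(g\overline{d}/2)^k$, which is a strong combinatorial statement about $G$ with no justification given. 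The paper avoids this entirely by keeping the per-component structure: the number of chords landing in component $T_j$ is $m_j=|E[V(T_j)]|-|V(T_j)|+1$, the "at most one chord per component" constraint gives the factorized bound $\prod_j(1+\tfrac{2}{z}m_j)$, and the edge budget $\sum_j m_j\leq e(G)=n\overline{d}/2$ together with $\ell(F)\leq L+n/g$ and AM-GM (plus monotonicity of $(1+c/t)^t$) closes the argument. When you pass to all vertex-disjoint cycle collections of $G$, you throw away both the per-component factorization and the edge budget, and there is nothing left to leverage. To repair the proof, keep the correct fiber description and bound $N(F)\leq\prod_j(1+\tfrac{2}{z}m_j)$, then run the paper's AM-GM computation.
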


\begin{proof}
We have seen that the inequality $F_G(z)\leq R_G(z+1)$ is trivial, so we only need to prove the other inequality.
Let $F$ be a forest with connected components $T_1,\dots, T_k$, where $k=k(F)$. For each $T_j$ let $V(T_j)$ be its induced vertex set. Let $E[V(T_j)]$ be the subset of edges of $G$ induced by $V(T_j)$. It contains the edges of $T_j$, but it may contain other edges. 

Furthermore, let $\ell(F)$ be the number of components that induces a graph with some cycles, that is, the number of components, where $|E[V(T_j)]|>|V(T_j)|-1$. We can embed the forest $F$ into a pseudo-forest in
$$\prod_{j=1}^{\ell(F)}(1+(|E[V(T_j)]|-|V(T_j)|+1))$$
ways with the same induced connected components. Indeed, at each component $T_j$ we may add no edges, or we may add one of the $|E[V(T_j)]|-|V(T_j)|+1$ edges. If we  take into account the weights $2^{c(A)}z^{-|A|}$ for a pseudo-forest, then we get that
$$\sum_{A\in \mathcal{PF}(G)}2^{c(A)}z^{-|A|}\leq \sum_{F\in \mathcal{F}(G)}z^{-|F|}\prod_{j=1}^{\ell(F)}\left(1+\frac{2}{z}(|E[V(T_j)]|-|V(T_j)|+1)\right).$$

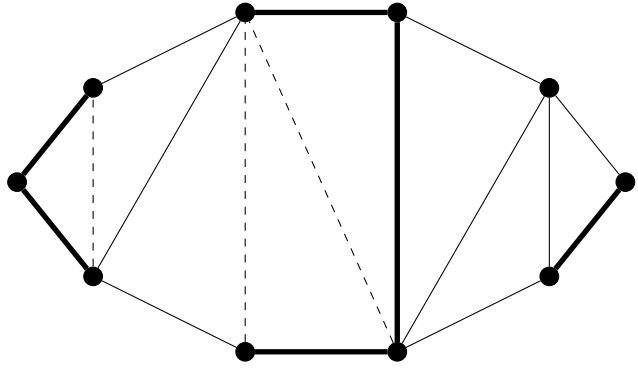
\begin{figure}[h!]
    \centering
    \begin{tikzpicture}
    \node[vertex] (u1) at (1,0) {};
    \node[vertex] (u2) at (3,-1) {};
    \node[vertex] (u3) at (5,-1) {};
    \node[vertex] (u4) at (7,0) {};
    \node[vertex] (u5) at (8,1.25) {};
    \node[vertex] (u6) at (7,2.5) {};
    \node[vertex] (u7) at (5,3.5) {};
    \node[vertex] (u8) at (3,3.5) {};
    \node[vertex] (u9) at (1,2.5) {};
    \node[vertex] (u10) at (0,1.25) {};
    
    \draw (u1) -- (u2);
	\draw[line width=2pt] (u2) -- (u3);
	\draw (u3) -- (u4);
	\draw[line width=2pt] (u4) -- (u5);
	\draw (u5) -- (u6);
	\draw (u6) -- (u7);
	\draw[line width=2pt] (u7) -- (u8);
	\draw (u8) -- (u9);
	\draw[line width=2pt] (u9) -- (u10);
	\draw[line width=2pt] (u10) -- (u1);
	\draw[dashed] (u1) -- (u9);
	\draw (u1) -- (u8);
	\draw[dashed] (u2) -- (u8);
	\draw[line width=2pt] (u3) -- (u7);
	\draw[dashed] (u3) -- (u8);
	\draw (u3) -- (u6);
	\draw (u4) -- (u6);

    \end{tikzpicture}
    \caption{A forest $F$ is depicted with thick edges. The dashed edges may be added to create a pseudo-forest with the same connected components, but in the middle of the picture at most one of the two dashed edges can be added.}
    \label{fig:my_label}
\end{figure}

Note that we have no equality in general since we get the same pseudo-forest from many forests. Note that if $|V(T_j)|<g$, then $E[V(T_j)]$ cannot contain a cycle with the exception of at most $L$ components. Thus $\ell(F)\leq L+n/g$. Then
\begin{align*}
\prod_{j=1}^{\ell(F)}\left(1+\frac{2}{z}(|E[V(T_j)]|-|V(T_j)|+1)\right)&\leq \left(\frac{1}{\ell(F)}\sum_{j=1}^{\ell(F)}\left(1+\frac{2}{z}(|E[V(T_j)]|-|V(T_j)|+1)\right)\right)^{\ell(F)}\\
&\leq \left(\frac{1}{\ell(F)}\sum_{j=1}^{\ell(F)}\left(1+\frac{2}{z}|E[V(T_j)]|\right)\right)^{\ell(F)}\\
&\leq \left(1+\frac{2|E|}{z\ell(F)}\right)^{\ell(F)}
\end{align*}
Since $\ell(F)\leq L+n/g$ and the function $(1+c/t)^t$ is monotone increasing for positive $t$ for every $c$ we have
$$\left(1+\frac{2|E|}{z\ell(F)}\right)^{\ell(F)}\leq \left(1+\frac{2|E|}{z(L+n/g)}\right)^{L+n/g}\leq\left(1+\frac{g \overline{d}}{z}\right)^{L+n/g}.$$
Hence 
$$R_G(z+1)=\sum_{A\in \mathcal{PF}(G)}2^{c(A)}z^{n-|A|}\leq \left(1+\frac{g \overline{d}}{z}\right)^{L+n/g}F_G(z).$$

\end{proof}

\section{Matching polynomial} \label{sec: matching polynomial}

In this section, we collect a few things about the matching polynomial. The matching polynomial of the graph $G$ is defined as follows. Let 
$$\mu_G(z)=\sum_{k=0}^{n/2}(-1)^km_k(G)z^{n-2k},$$
where $m_k(G)$ denotes the number of matchings of size $k$.
Note that for a $d$-regular graph $G$ we have
$$R_G(z)=\sum_{M\in \mathcal{M}(G)}(-z)^{|M|}(d+z-1)^{n-2|M|}=z^{n/2}\mu_G\left(\frac{d-1+z}{\sqrt{z}}\right).$$
The following theorem about the matching polynomial is fundamental for us.

\begin{Th}[Heilmann and Lieb \cite{heilmann1972theory}] \label{th: HeiLie}
All zeros of the matching polynomial $\mu_G(z)$ are real. Furthermore, if the largest degree $\Delta$ satisfies $\Delta \geq 2$, then all zeros lie in the interval $(-2\sqrt{\Delta-1},2\sqrt{\Delta-1})$.
\end{Th}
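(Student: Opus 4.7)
The plan is to follow the classical Heilmann--Lieb approach via an interlacing induction, combined with Godsil's path-tree construction to handle the norm bound. First I would establish the fundamental recurrence for the matching polynomial: for any vertex $v \in V(G)$,
$$\mu_G(z) = z\,\mu_{G-v}(z) - \sum_{u \sim v}\mu_{G-v-u}(z),$$
which follows immediately by splitting the sum defining $\mu_G$ according to whether $v$ is unmatched or matched to a specific neighbor.

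Second, I would prove by strong induction on $|V(G)|$ the enhanced assertion: $\mu_G(z)$ has only real roots, and for every vertex $v$ the roots of $\mu_{G-v}$ interlace those of $\mu_G$. The key step is to evaluate the recurrence at the real roots $\alpha_1 \leq \dots \leq \alpha_{k}$ of $\mu_{G-v}$: one has $\mu_G(\alpha_i) = -\sum_{u \sim v}\mu_{G-v-u}(\alpha_i)$, and the inductive hypothesis applied to the smaller graph $G-v$ (interlacing of $\mu_{G-v-u}$ and $\mu_{G-v}$ for each neighbor $u$) forces every summand to share the same sign, so the values $\mu_G(\alpha_1),\mu_G(\alpha_2),\dots$ alternate in sign. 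Combined with the leading-term behavior of $\mu_G$ at $\pm\infty$, this produces one root of $\mu_G$ in each open interval $(\alpha_i, \alpha_{i+1})$ together with the two extremal roots outside $[\alpha_1,\alpha_k]$, accounting for all $n$ roots and yielding both reality and interlacing.

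Third, for the localization in $(-2\sqrt{\Delta-1},\,2\sqrt{\Delta-1})$, I would invoke Godsil's path-tree $T(G,v)$: the tree of non-backtracking walks in $G$ rooted at $v$, which satisfies the divisibility $\mu_G(z) \mid \mu_{T(G,v)}(z)$ and has maximum degree at most $\Delta$. Since the matching polynomial of a tree coincides with the characteristic polynomial of its adjacency matrix, it suffices to bound the spectral radius of a finite tree of maximum degree $\Delta \geq 2$; a Rayleigh-quotient argument comparing with the universal cover (the infinite $\Delta$-regular tree, whose adjacency spectrum equals $[-2\sqrt{\Delta-1}, 2\sqrt{\Delta-1}]$) gives the strict bound $2\sqrt{\Delta-1}$.

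The main obstacle is the sign analysis in the interlacing induction: one must handle the degenerate situations in which the inductive interlacing is not strict, i.e.\ when $\mu_{G-v-u}$ shares a root with $\mu_{G-v}$ for some neighbor $u$. This can be resolved by a small perturbation argument together with a careful bookkeeping of multiplicities, but it is the delicate place where the proof can easily go wrong. Once the interlacing statement is secured, the passage to the norm bound via path trees is largely formal and reduces to the classical spectral estimate for trees.
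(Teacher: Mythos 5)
The paper does not prove this statement: Theorem~\ref{th: HeiLie} is imported directly from Heilmann and Lieb, and the matching-polynomial machinery (real-rootedness, root localization, path-trees, tree-like walks) is taken as black-box background. There is therefore no in-paper proof to compare against.

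That said, your outline is a correct sketch of the now-standard proof of the Heilmann--Lieb theorem, essentially the one in Godsil's \emph{Algebraic Combinatorics}: the vertex deletion recurrence, an interlacing induction for real-rootedness, and the path-tree reduction to the adjacency spectrum of a tree for the localization. Two small corrections and one gap worth pinning down. First, the path-tree $T(G,v)$ is the tree of \emph{simple paths} starting at $v$ (as the paper itself defines it in Section~\ref{sec: matching polynomial}), not the tree of all non-backtracking walks; the latter is the universal cover and is infinite whenever $G$ has a cycle, whereas Godsil's divisibility $\mu_G(z) \mid \mu_{T(G,v)}(z)$ concerns the finite path-tree. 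Second, you should record the easy but necessary check that $T(G,v)$ has maximum degree at most $\Delta$: a non-root vertex corresponding to a path ending at $w$ has one parent plus at most $\deg_G(w)-1 \le \Delta-1$ children, and the root has $\deg_G(v) \le \Delta$ children. Third, for the strictness of the bound $2\sqrt{\Delta-1}$ on a finite tree, the Rayleigh-quotient comparison with $\mathbb{T}_\Delta$ does work, but the point that makes it strict should be stated: $A_{\mathbb{T}_\Delta}$ has purely continuous spectrum, so its operator norm $2\sqrt{\Delta-1}$ is not attained by any $\ell^2$ vector, in particular not by the zero-extension of the Perron eigenvector of the finite subtree. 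With those adjustments, and the degenerate-interlacing bookkeeping you already flag, the plan is sound.
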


Let $\mu_G(z)=\prod_{i=1}^n(z-\alpha_i)$, and
$s_k(G)=\frac{1}{v(G)}\sum_{i=1}^n \alpha_i^k$.
The quantities $s_k(G)$ have a combinatorial meaning. They count the so-called tree-like walks.

\begin{Def} Let $G$ be graph with a given vertex $u$. The \textit{path-tree} $T(G,u)$ is defined as follows. The vertices of $T(G,u)$ are the paths in $G$ which start at the  vertex $u$ and two paths joined by an edge if one of them is a one-step extension of the other.
\end{Def}

\begin{figure}
\begin{center}
\includegraphics[scale=0.8]{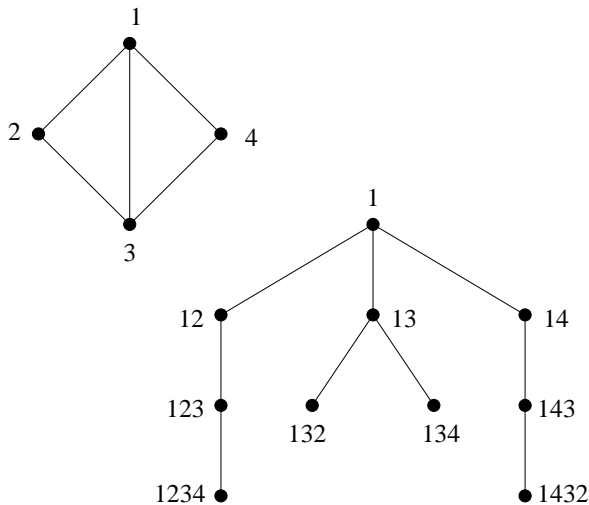}
\caption{A path-tree from the vertex $1$.}  
\end{center}
\end{figure}

It turns out that $s_k(G)$ counts certain walks called tree-like walks.

\begin{Def}
A closed tree-like walk of length $\ell$ is a closed walk on $T(G,u)$ of length $\ell$ starting and ending at the vertex $u$.
\end{Def}

Note that a priori the tree-like walk is on the path-tree, although one can make a correspondence with certain walks on the graph itself. Indeed, a walk in the tree $T(G,u)$ from $u$ can be imagined as follows. Suppose that in the graph $G$ a worm is sitting at the vertex $u$ at the beginning. Then at each step the worm can either grow or pull back its head. When it grows it can move its head to a neighboring unoccupied vertex while keeping its tail at vertex $u$.  At each step the worm occupies a path in the graph $G$. A closed walk in the tree $T(G,u)$ from $u$ to $u$ corresponds to the case when at the final step the worm occupies only vertex $u$.

\begin{Lemma}[Godsil \cite{godsil1993algebraic}] \label{tree-like walks}
The number of all closed tree-like walks of length $\ell$ is exactly $s_{\ell}(G)$.
\end{Lemma}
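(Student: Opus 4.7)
The plan is to reduce the claim to two classical facts about the matching polynomial. The first is that on a tree $T$ the matching polynomial coincides with the characteristic polynomial of the adjacency matrix $A_T$. Applied to the path-tree $T(G,u)$, rooted at $u$, this yields
$$\bigl((zI - A_{T(G,u)})^{-1}\bigr)_{uu} \;=\; \frac{\mu_{T(G,u)-u}(z)}{\mu_{T(G,u)}(z)}.$$
Expanding the left-hand side as a Neumann series in $1/z$, the coefficient of $z^{-\ell-1}$ is $(A_{T(G,u)}^{\ell})_{uu}$, which is exactly the number of closed walks of length $\ell$ in $T(G,u)$ based at $u$, i.e.\ the number of closed tree-like walks of length $\ell$ starting at $u$.

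The second fact is Godsil's quotient identity
$$\frac{\mu_{G-u}(z)}{\mu_G(z)} \;=\; \frac{\mu_{T(G,u)-u}(z)}{\mu_{T(G,u)}(z)},$$
which is the heart of the matter. I would either cite it directly from \cite{godsil1993algebraic} or prove it by induction on $v(G)$, using that both sides satisfy the same recursion inherited from the vertex-deletion identity $\mu_H(z) = z\,\mu_{H-u}(z) - \sum_{v \sim u} \mu_{H-u-v}(z)$ applied at the distinguished vertex $u$; the combinatorial definition of $T(G,u)$ is engineered precisely so that this recursion matches on both sides.

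Combining the two identities, the generating function for closed tree-like walks based at $u$ simplifies to $\mu_{G-u}(z)/\mu_G(z)$. Summing over $u \in V(G)$ and applying the standard derivative identity $\sum_{u \in V(G)} \mu_{G-u}(z) = \mu_G'(z)$ (which follows from differentiating the defining sum over matchings and reorganising by the removed vertex) yields
$$\sum_{u \in V(G)} \frac{\mu_{G-u}(z)}{\mu_G(z)} \;=\; \frac{\mu_G'(z)}{\mu_G(z)} \;=\; \sum_{i=1}^{n} \frac{1}{z-\alpha_i} \;=\; \sum_{\ell \geq 0} \frac{\sum_{i=1}^n \alpha_i^{\ell}}{z^{\ell+1}}.$$
Comparing coefficients of $z^{-\ell-1}$ identifies the total number of closed tree-like walks of length $\ell$, aggregated over all starting vertices, with $\sum_{i=1}^n \alpha_i^{\ell} = v(G)\, s_\ell(G)$, which is the assertion of the lemma (with $s_\ell(G)$ read as the average over $u\in V(G)$ of the number of closed tree-like walks of length $\ell$ from $u$).

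The only real obstacle is Godsil's quotient identity itself; everything else is formal manipulation of generating functions together with a single standard derivative identity. If Godsil's identity is taken as a black box, the proof reduces to a few lines of bookkeeping; if not, a short inductive argument based on the vertex-deletion recurrence recovers it.
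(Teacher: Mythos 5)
Your proof is correct and is essentially the standard argument from Godsil's book, which is what the paper cites without reproducing a proof: Cramer's rule on the resolvent of the (finite) path-tree $T(G,u)$ to get $\bigl((zI-A_{T(G,u)})^{-1}\bigr)_{uu}=\mu_{T(G,u)-u}/\mu_{T(G,u)}$, Godsil's quotient identity $\mu_{T(G,u)-u}/\mu_{T(G,u)}=\mu_{G-u}/\mu_G$ to transfer this to $G$, and then $\sum_u \mu_{G-u}=\mu_G'$ to aggregate into the logarithmic-derivative power-sum expansion. Your closing parenthetical is the right reading: the paper's displayed definition $s_k(G)=\frac{1}{v(G)}\sum\alpha_i^k$ is a typo (note that just below it they write $\int z^k\,d\rho^m_G = \frac{1}{v(G)}\sum\alpha_i^k=\frac{s_k(G)}{v(G)}$, which is only consistent if $s_k(G)=\sum\alpha_i^k$ unnormalized), so the lemma as intended asserts that the total over all basepoints $u$ equals $\sum_i\alpha_i^\ell$, which is exactly what your generating-function bookkeeping produces.
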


Note that we can introduce $s_k(\mathbb{T}_d,o)$ this way: this is simply the number of closed walks from a root vertex $o$ of the infinite $d$-regular tree of length $k$. 

Let $\rho^{m}_G$ be the uniform measure supported on these roots, that is, given a function $f:\mathbb{C}\to\mathbb{C}$ we have
$$\int f(z) d\rho^{m}_G(z)=\frac{1}{v(G)}\sum_{i=1}^{v(G)}f(\alpha_i).$$
In particular, 
$$\int z^k d\rho^{m}_G(z)=\frac{1}{v(G)}\sum_{i=1}^{v(G)}\alpha_i^k=\frac{s_k(G)}{v(G)}.$$
There is a measure $\rho_{\mathrm{KM}}$ called Kesten-McKay measure for which 
$$s_k(\mathbb{T}_d,o)=\int z^k d\rho_{\mathrm{KM}}(z).$$
The Kesten-McKay measure has an explicit density function
$$\frac{d\sqrt{4(d-1)-z^2}}{2\pi (d^2-z^2)}\cdot \textbf{1}_{(-\omega,\omega)},$$
where $\omega=2\sqrt{d-1}$ and $\textbf{1}_{(-\omega,\omega)}$ is the characteristic function of the interval.
The following lemma is a special case of a more general theorem of Csikv\'ari and Frenkel  \cite{csikvari2016benjamini} about Benjamini--Schramm convergent graph sequences, see also the papers \cite{abert2016matchings,abert2015matching}.

\begin{Lemma} \label{conv-measures}
If $(G_n)_n$ is a sequence of $d$-regular graphs with $g(G_n)\to \infty$, then the measures $\rho^{m}_{G_n}$  converge weakly to the Kesten-McKay measure, that is, for every bounded continuous function $f$ we have
$$\lim_{n\to \infty}\int f(z) d\rho^{m}_{G_n}(z)=\int f(z) d\rho_{\mathrm{KM}}(z).$$
\end{Lemma}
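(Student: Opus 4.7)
The plan is to establish the weak convergence via the method of moments. Theorem~\ref{th: HeiLie} implies that every measure $\rho^{m}_{G_n}$ is supported on the fixed compact interval $[-2\sqrt{d-1},2\sqrt{d-1}]$, which also contains the support of $\rho_{\mathrm{KM}}$. Since the sequence is uniformly compactly supported and $\rho_{\mathrm{KM}}$ is determined by its moments, weak convergence reduces to showing that for every fixed $k\geq 0$,
$$\lim_{n\to\infty}\int z^k\,d\rho^{m}_{G_n}(z)=\int z^k\,d\rho_{\mathrm{KM}}(z).$$

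I would then rewrite both sides combinatorially. By Lemma~\ref{tree-like walks}, the left-hand side equals $\tfrac{1}{v(G_n)}$ times the total number of closed tree-like walks of length $k$ in $G_n$, i.e.\ the average over vertices $u$ of the number of closed tree-like walks of length $k$ starting at $u$. The right-hand side equals $s_k(\mathbb{T}_d,o)$, the number of closed tree-like walks of length $k$ from the root $o$ of $\mathbb{T}_d$. The heart of the argument is a locality observation: a closed walk of length $k$ in the path-tree $T(G_n,u)$ based at $u$ reaches depth at most $\lfloor k/2\rfloor$ (since each step changes the depth in $T(G_n,u)$ by $\pm 1$), so it involves only paths in $G_n$ of length at most $\lfloor k/2\rfloor$ starting at $u$. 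The set of such paths, together with the extension/retraction relation among them, depends only on the rooted isomorphism type of the ball $B_{G_n}(u,\lfloor k/2\rfloor)$. When $g(G_n)>k+1$, this ball is a tree in which every non-boundary vertex has degree $d$, so it is rooted-isomorphic to the corresponding ball around the root of $\mathbb{T}_d$.

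Combining these two observations, once $n$ is large enough that $g(G_n)>k+1$ (which holds for all sufficiently large $n$ by hypothesis), the number of closed tree-like walks of length $k$ from each $u\in V(G_n)$ equals $s_k(\mathbb{T}_d,o)$, and therefore the $k$-th moment of $\rho^{m}_{G_n}$ is exactly $s_k(\mathbb{T}_d,o)$. Letting $n\to\infty$ then yields the desired moment identity for every $k$, which completes the proof. I do not expect any serious obstacle here: the Heilmann--Lieb bound supplies uniform compact support, so no tail analysis is needed to upgrade moment convergence to weak convergence, and the matching of path-tree structure to $\mathbb{T}_d$ under a large-girth hypothesis is a standard locality argument. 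The only mildly delicate bookkeeping is the depth bound for closed tree-like walks, which is a one-line consequence of the tree structure of $T(G_n,u)$.
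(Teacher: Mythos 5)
Your proposal is correct and takes essentially the same approach as the paper: Heilmann--Lieb gives uniform compact support, Weierstrass/method-of-moments reduces weak convergence to moment convergence, and Lemma~\ref{tree-like walks} together with the locality of tree-like walks (they explore only the ball of radius $\lfloor k/2\rfloor$, which is $\mathbb{T}_d$-like once the girth is large) gives the moment identity. The paper states the girth threshold slightly more conservatively as $k<g(G)/2$, but this is immaterial since $g(G_n)\to\infty$.
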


The proof of Lemma~\ref{conv-measures} is particularly simple since every continuous function on the interval $[-2\sqrt{d-1},2\sqrt{d-1}]$ can be approximated by a polynomial in sup norm. On the other hand, if $k<g(G)/2$, then
$$\int z^k d\rho^{m}_{G}(z)=\frac{s_k(G)}{v(G)}=s_k(\mathbb{T}_d,o)=\int z^k d\rho_{\mathrm{KM}}(z).$$

Finally we need the evaluation of certain integral along the Kesten-McKay measure. 

\begin{Lemma} \label{integral evaluation} For $0\leq z\leq d-1$ we have
$$z^{1/2}\exp\left(\int \ln\left(\frac{d+z-1}{\sqrt{z}}-t\right) d\rho_{\mathrm{KM}}(t)\right)=(d-1)\left(\frac{(d-1)^2}{(d-1)^2-z}\right)^{d/2-1}.$$
For $z>d-1$ we have
$$z^{1/2}\exp\left(\int \ln\left(\frac{d+z-1}{\sqrt{z}}-t\right) d\rho_{\mathrm{KM}}(t)\right)=z\left(1+\frac{1}{z-1}\right)^{d/2-1}.$$
\end{Lemma}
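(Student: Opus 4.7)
The plan is to compute $I(w) := \int \ln(w-t)\, d\rho_{\mathrm{KM}}(t)$ directly, where $w := \frac{d+z-1}{\sqrt{z}} = \sqrt{z} + \frac{d-1}{\sqrt{z}}$. Note that by AM--GM one has $w \geq 2\sqrt{d-1}$, with equality iff $z = d-1$, and $\sqrt{w^2 - 4(d-1)} = |z-(d-1)|/\sqrt{z}$. My strategy is to pass to the trigonometric parametrisation $t = 2\sqrt{d-1}\cos\theta$ and $w = \sqrt{d-1}(u + u^{-1})$ with $u \geq 1$. Under these substitutions the Kesten--McKay measure becomes the even $[-\pi,\pi]$-density
$$\frac{d(d-1)\sin^2\theta}{\pi\, D(\theta)}\, d\theta, \qquad D(\theta) := d^2 - 4(d-1)\cos^2\theta = |(d-1) - e^{2i\theta}|^2,$$
and the integrand factorises as $w - t = \frac{\sqrt{d-1}}{u}(u-e^{i\theta})(u-e^{-i\theta})$, so that
$$\ln(w-t) = \tfrac12 \ln(d-1) - \ln u + 2\ln|u - e^{i\theta}|.$$

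The key observation is that the Kesten--McKay density splits as Lebesgue measure plus a Poisson-kernel term. A one-line partial fraction in $\cos^2\theta$ gives
$$\frac{d(d-1)\sin^2\theta}{\pi\, D(\theta)} = \frac{d}{4\pi} - \frac{d(d-2)^2}{4\pi\, D(\theta)}.$$
The Lebesgue piece I would handle by Jensen's formula: $\int_{-\pi}^{\pi} \ln|u - e^{i\theta}|\, d\theta = 2\pi \ln u$ for $u \geq 1$. For the Poisson piece, I would expand $\ln|u - e^{i\theta}| = \ln u - \sum_{k\geq 1}\cos(k\theta)/(ku^k)$ and use the Poisson-kernel Fourier series
$$\frac{1}{D(\theta)} = \frac{1}{d(d-2)}\sum_{m\in\mathbb{Z}} \frac{e^{2im\theta}}{(d-1)^{|m|}}.$$
Only even $k$ survive orthogonality, and the remaining series telescopes via $-\sum_{j\geq 1} x^j/j = \ln(1-x)$ to a single logarithm, giving
$$\int_{-\pi}^{\pi}\frac{\ln|u - e^{i\theta}|}{D(\theta)}\, d\theta = \frac{\pi}{d(d-2)}\ln\frac{u^2(d-1) - 1}{d-1}.$$

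Assembling the two contributions yields the closed form $I(w) = \frac{d-1}{2}\ln(d-1) + (d-1)\ln u - \bigl(\frac{d}{2}-1\bigr)\ln\bigl(u^2(d-1)-1\bigr)$. To finish, I would identify $u$ as a function of $z$: solving $\sqrt{d-1}(u+u^{-1}) = \sqrt{z} + (d-1)/\sqrt{z}$ with $u \geq 1$ gives $u = \sqrt{z/(d-1)}$ when $z > d-1$ (so $u^2(d-1) - 1 = z-1$) and $u = \sqrt{(d-1)/z}$ when $z \leq d-1$ (so $u^2(d-1) - 1 = ((d-1)^2 - z)/z$). Adding the $\tfrac12 \ln z$ prefactor and simplifying produces exactly the two claimed formulas, with the ratio $z/(z-1)$ in one case and $(d-1)^2/((d-1)^2 - z)$ in the other. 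The main obstacle is the Poisson-kernel/Fourier-series evaluation in the second piece; as a sanity check, the two branches agree at $z = d-1$ with common value $(d-1)\bigl(\tfrac{d-1}{d-2}\bigr)^{d/2-1}$. An alternative plan is to verify that both sides satisfy the same logarithmic ODE in $z$, using the Stieltjes transform of $\rho_{\mathrm{KM}}$ (which comes from the self-consistent equation for $\mathbb{T}_d$), and then check one boundary value---but this route slightly obscures why two distinct formulas appear on the two sides of $z = d-1$.
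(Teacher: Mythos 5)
Your proposal is correct and takes a genuinely different route from the paper. The paper quotes McKay's Lemma~3.5 of \cite{mckay1983spanning} verbatim, which gives a closed form for $\int \ln(1-\gamma t)\,d\rho_{\mathrm{KM}}(t)$ in terms of the auxiliary quantity $\eta$, and then the proof consists only of the change of variable $\gamma=\sqrt{z}/(d+z-1)$ and noting that $\eta$ takes one of two values depending on the sign of $z-(d-1)$. Your proof is self-contained: you re-derive McKay's integral from scratch by parametrising $t=2\sqrt{d-1}\cos\theta$, $w=\sqrt{d-1}(u+u^{-1})$, factoring $w-t=\frac{\sqrt{d-1}}{u}|u-e^{i\theta}|^2$, and splitting the Kesten--McKay density via the partial fraction $\frac{d(d-1)\sin^2\theta}{\pi D(\theta)}=\frac{d}{4\pi}-\frac{d(d-2)^2}{4\pi D(\theta)}$ into a Lebesgue piece (evaluated by Jensen's formula) and a Poisson-kernel piece (evaluated by the Fourier expansion $\frac{1}{D(\theta)}=\frac{1}{d(d-2)}\sum_m (d-1)^{-|m|}e^{2im\theta}$ together with the logarithm series for $\ln|u-e^{i\theta}|$). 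I checked the algebra: the partial fraction, the identity $D(\theta)=|(d-1)-e^{2i\theta}|^2$, the normalisation $(d-1)^2(1-(d-1)^{-2})=d(d-2)$, the assembled form $I(w)=\tfrac{d-1}{2}\ln(d-1)+(d-1)\ln u-(\tfrac{d}{2}-1)\ln(u^2(d-1)-1)$, the two branches $u=\sqrt{z/(d-1)}$ (for $z>d-1$, giving $u^2(d-1)-1=z-1$) and $u=\sqrt{(d-1)/z}$ (for $z\leq d-1$, giving $u^2(d-1)-1=((d-1)^2-z)/z$), and the resulting simplifications all check out and reproduce the stated formulas; the two branches also agree at $z=d-1$ as you observe. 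The advantage of your approach is that it is independent of \cite{mckay1983spanning} and makes transparent that the two formulas come from the two roots $u, u^{-1}$ of $\sqrt{d-1}(u+u^{-1})=w$ with $u\geq 1$; the paper's approach is shorter but opaque about this. Minor remarks: the interchange of sum and integral and the use of the series for $\ln|u-e^{i\theta}|$ require $u>1$, i.e.\ $z\neq d-1$; the equality at $z=d-1$ then follows by continuity (or by Jensen's formula for $u=1$), which you should say explicitly. Also for $d=2$ the Poisson-kernel term formally reads $0/0$, but the coefficient $(d-2)^2$ in the partial fraction vanishes, so the final closed form still holds for $d=2$ — worth a sentence if you want the lemma uniform in $d\geq 2$.
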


The proof of Lemma~\ref{integral evaluation} is actually a simple consequence of the following theorem of Mckay (see Lemma 3.5 of \cite{mckay1983spanning}).

\begin{Lemma} Let $\omega=2\sqrt{d-1}$, and $|\gamma|<\frac{1}{\omega}$. Then
$$\int_{-\omega}^{\omega}\ln(1-\gamma z)\ d\rho_{\mathrm{KM}}(z) = \ln \left(\frac{1}{\eta}\left(\frac{d-1}{d-\eta}\right)^{d/2-1}\right),$$
where 
$$\eta=\frac{1-(1-4(d-1)\gamma^2)^{1/2}}{2(d-1)\gamma^2}.$$
\end{Lemma}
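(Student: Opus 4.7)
The plan is to reduce the integral to the form handled by McKay's lemma, which is quoted immediately after the statement. Write $a=\frac{d+z-1}{\sqrt z}$ and observe that
\[
\ln(a-t)=\ln a+\ln\!\bigl(1-\gamma t\bigr),\qquad \gamma:=\frac{1}{a}=\frac{\sqrt z}{d+z-1}.
\]
A quick algebraic identity will be the backbone of the argument:
\[
(d+z-1)^2-4(d-1)z=(d-1-z)^2,
\]
which gives $a\ge\omega=2\sqrt{d-1}$ with equality only at $z=d-1$. In particular $|\gamma|<1/\omega$ for every $z\in[0,d-1)\cup(d-1,\infty)$, so McKay's lemma applies and the endpoint $z=d-1$ can be dealt with at the end by continuity of both sides.

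Next I would plug $\gamma=\sqrt z/(d+z-1)$ into McKay's formula. The square root there simplifies using the identity above:
\[
\bigl(1-4(d-1)\gamma^2\bigr)^{1/2}=\frac{|d-1-z|}{d+z-1},
\]
and this is where the case split in the statement comes from. A short calculation yields
\[
\eta=\begin{cases}\displaystyle\frac{d+z-1}{d-1},& z<d-1,\\[4pt]\displaystyle\frac{d+z-1}{z},& z>d-1,\end{cases}
\]
and correspondingly
\[
\frac{d-1}{d-\eta}=\begin{cases}\displaystyle\frac{(d-1)^2}{(d-1)^2-z},& z<d-1,\\[4pt]\displaystyle\frac{z}{z-1}=1+\dfrac{1}{z-1},& z>d-1.\end{cases}
\]
In each case the factor $1/\eta$ from McKay's formula combines with $\exp(\ln a)=a$ to give a clean cancellation:
\[
a\cdot\frac1\eta=\frac{d+z-1}{\sqrt z}\cdot\frac{d-1}{d+z-1}=\frac{d-1}{\sqrt z}\quad(z<d-1),\qquad a\cdot\frac1\eta=\sqrt z\quad(z>d-1).
\]

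Assembling: for $z<d-1$,
\[
z^{1/2}\exp\!\int\ln(a-t)\,d\rho_{\mathrm{KM}}(t)=z^{1/2}\cdot\frac{d-1}{\sqrt z}\cdot\left(\frac{(d-1)^2}{(d-1)^2-z}\right)^{d/2-1}=(d-1)\left(\frac{(d-1)^2}{(d-1)^2-z}\right)^{d/2-1},
\]
and for $z>d-1$,
\[
z^{1/2}\exp\!\int\ln(a-t)\,d\rho_{\mathrm{KM}}(t)=z^{1/2}\cdot\sqrt z\cdot\left(1+\frac1{z-1}\right)^{d/2-1}=z\left(1+\frac1{z-1}\right)^{d/2-1}.
\]
Finally, extend to $z=d-1$ by continuity: the integrand $\ln(\omega-t)$ is integrable against $\rho_{\mathrm{KM}}$ since the Kesten--McKay density vanishes like $\sqrt{\omega-t}$ at $t=\omega$, and the right-hand expression is continuous at $z=d-1$ (both branches tend to $(d-1)$ there, consistently with $(1+1/(z-1))^{d/2-1}\big|_{z=d-1}=((d-1)/(d-2))^{d/2-1}\cdot\dots$, where a direct limit check completes the case).

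The only nontrivial step is the algebraic simplification of $\eta$ and $d-\eta$; everything else is bookkeeping. The identity $(d+z-1)^2-4(d-1)z=(d-1-z)^2$ is what makes the two closed forms in the statement appear naturally from the two signs of $d-1-z$.
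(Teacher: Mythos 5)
Your proposal does not prove the statement in question. The lemma you were asked to prove \emph{is} McKay's lemma itself (Lemma 3.5 of \cite{mckay1983spanning}), which the paper quotes without proof as a known result. What you have written is instead a derivation of the \emph{preceding} lemma in the paper (the one evaluating $z^{1/2}\exp\bigl(\int\ln(\tfrac{d+z-1}{\sqrt z}-t)\,d\rho_{\mathrm{KM}}(t)\bigr)$), and your derivation uses McKay's lemma as its main ingredient. In other words, you have assumed the very statement you were asked to establish. As far as it goes, your computation is correct and matches the paper's proof of that other lemma almost verbatim: the identity $(d+z-1)^2-4(d-1)z=(d-1-z)^2$, the two branches for $\eta$, the simplification of $d-\eta$, and the cancellation $a/\eta=(d-1)/\sqrt z$ or $\sqrt z$ are all right. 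But none of this touches the actual content of McKay's lemma.

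To genuinely prove the stated lemma, you must evaluate
\[
\int_{-\omega}^{\omega}\ln(1-\gamma t)\cdot\frac{d\sqrt{4(d-1)-t^{2}}}{2\pi(d^{2}-t^{2})}\,dt
\]
directly. One standard route is to expand $\ln(1-\gamma t)=-\sum_{k\ge 1}\gamma^{k}t^{k}/k$ (legitimate since $|\gamma|<1/\omega$ makes $|\gamma t|<1$ on the support), interchange sum and integral, and identify the moments $\int t^{k}\,d\rho_{\mathrm{KM}}$ with the closed-walk counts $s_{k}(\mathbb{T}_{d},o)$ of the $d$-regular tree, whose generating function is classical; resumming produces the closed form with $\eta$. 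An alternative is the substitution $t=2\sqrt{d-1}\cos\theta$ followed by a residue or Poisson-kernel computation. Either way, this requires real work that your proposal does not attempt.
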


\begin{proof}[Proof of Lemma~\ref{integral evaluation}]
Let $\gamma=\frac{\sqrt{z}}{d+z-1}$. Note that $|\gamma|<\frac{1}{\omega}$ if $z\neq d-1$. The result follows by continuity if $z=d-1$. Then
$$\ln\left(\frac{d+z-1}{\sqrt{z}}-t\right)=\ln \left(\frac{d+z-1}{\sqrt{z}}\right)+\ln(1-\gamma t).$$
Note that
$$1-4(d-1)\gamma^2=\frac{(d-1-z)^2}{(d+z-1)^2}$$
and so
$$\frac{1-(1-4(d-1)\gamma^2)^{1/2}}{2(d-1)\gamma^2}=\left\{
\begin{array}{ll}
\frac{d+z-1}{d-1} &\mbox{if}\ \ 0\leq z\leq d-1,\\
\frac{d+z-1}{z} &\mbox{if}\ \  d-1\leq z.\\
\end{array}\right.$$
The rest is simple substitution.

\end{proof}

\section{Proof of Theorems~\ref{forest-limit} and \ref{tutte-limit} for $y=1$} \label{sec: forest-limit 1}

In this part, we prove Theorems~\ref{forest-limit} and \ref{tutte-limit} for $y=1$.

\begin{proof}[Proof of Theorems~\ref{forest-limit} and \ref{tutte-limit}]
Let $x=z+1$. First we assume that $x>1$, that is, $z>0$. The claim will follow for $x=1$ by continuity, see the end of the proof.
Recall that $F_G(z)=z^{k(G)}T_G(z+1,1)$. 
By Theorem~\ref{comparison} we have
$$\left(1+\frac{g \overline{d}}{z}\right)^{-L-n/g}R_G(z+1)\leq F_G(z)\leq R_G(z+1)$$
if $G$ contains at most $L$ cycles of length at most $g-1$. Thus
$$\lim_{n\to \infty}F_{G_n}(z)^{1/v(G_n)}=\lim_{n\to \infty}R_{G_n}(z+1)^{1/v(G_n)}.$$
In case of a random $d$-regular graph sequence $(G_n)_n$ we use the fact that $L=O_{g,d}(1)=o(v(G_n))$ asymptotically almost surely.\\
Since
$$R_G(z)=\sum_{M\in \mathcal{G}}(-z)^{|M|}(d+z-1)^{n-2|M|}=z^{v(G)/2}\mu_G\left(\frac{d-1+z}{\sqrt{z}}\right)$$
for $d$-regular graphs we have
$$\frac{1}{v(G)}\ln R_G(z+1)=\frac{1}{2}\ln(z+1)+\frac{1}{v(G)}\ln \mu_G\left(\frac{d+z}{\sqrt{z+1}}\right).$$
Let us introduce the notation $u:=\frac{d+z}{\sqrt{z+1}}$. Note that
$$u=\frac{d+z}{\sqrt{z+1}}=\frac{(d-1)+z+1}{\sqrt{z+1}}=\frac{d-1}{\sqrt{z+1}}+\sqrt{z+1}\geq 2\sqrt{d-1}$$
and we have strict inequality if $z+1\neq d-1$.
Thus
$$\frac{1}{v(G)}\ln R_G(z+1)=\frac{1}{2}\ln(z+1)+\frac{1}{v(G)}\ln \mu_G(u).$$
Here 
$$\frac{1}{v(G)}\ln \mu_G(u)=\frac{1}{v(G)}\sum_{i=1}^n\ln(u-\alpha_i)=\int \ln(u-t)\, d\rho^{m}_G(t).$$
If $z+1\neq d-1$, then the function $\ln(u-t)$ is a bounded continuous function on the interval $(-2\sqrt{d-1},2\sqrt{d-1})$ and $\rho^{m}_{G_n}$  converges weakly to the Kesten-McKay measure. Hence
$$\lim_{n\to \infty}\int \ln(u-t)\, d\rho^{m}_{G_n}(t)=\int \ln(u-t)\, d\rho_{\mathrm{KM}}(t).$$
Putting these facts together we get that
$$\lim_{n\to \infty}F_{G_n}(z)^{1/v(G_n)}=(z+1)^{1/2}\exp\left(\int \ln\left(\frac{d+z}{\sqrt{z+1}}-t\right) d\rho_{\mathrm{KM}}(t)\right)=t_d(z+1),$$
where in the last step we used Lemma~\ref{integral evaluation}.
If $z+1=d-1$ then the claim is still true since $F_G(z)$ is a continuous, monotone increasing function just as the function 
$t_d(z+1)$. So if for every $z\neq d-2$ we have $\lim_{n\to \infty}F_{G_n}(z)^{1/v(G_n)}=t_d(z+1)$, then it is also true at $z=d-2$. Hence
$$\lim_{n\to \infty} ((x-1)^{k(G_n)}T_{G_n}(x,1))^{1/v(G_n)}=t_d(x)$$
for all $x>1$, that is, 
$$\lim_{n\to \infty} T_{G_n}(x,1)^{1/v(G_n)}=t_d(x).$$
Finally, for $x=1$ we can conclude as follows.
Since the coefficients of $T_G(x,1)$ are non-negative and its degree is $v(G)-k(G)$ we get that for $x>1$ we have
$$T_G(1,1)\leq T_G(x,1)\leq x^{v(G)}T_G(1,1).$$
Hence 
$$\limsup_{n\to \infty}T_{G_n}(1,1)^{1/v(G_n)}\leq \limsup_{n\to \infty}T_{G_n}(x,1)^{1/v(G_n)}=t_d(x)$$
and
$$\liminf_{n\to \infty}T_{G_n}(1,1)^{1/v(G_n)}\geq \liminf_{n\to \infty}(x^{-v(G_n)}T_{G_n}(x,1))^{1/v(G_n)}=\frac{t_d(x)}{x}.$$
Since $\lim_{x\searrow 1}t_d(x)=\lim_{x\searrow 1}\frac{t_d(x)}{x}=t_d(1)$ we get that
$$\lim_{n\to \infty}T_{G_n}(1,1)^{1/v(G_n)}=t_d(1).$$
Finally, in the particular case when $x=2$  we get that
$$\lim_{n\to \infty}F(G_n)^{1/v(G_n)}=t_d(2)= (d-1)\left(\frac{(d-1)^2}{d^2-2d-1}\right)^{d/2-1}.$$
\end{proof}

\section{Proof of Theorem  \ref{tutte-limit} for  $0\leq y<1$} \label{sec: forest-limit general}

In this part, we prove Theorems~\ref{forest-limit} and \ref{tutte-limit} for arbitrary $0\leq y< 1$.

\begin{Def}
Given a graph $G$ and an ordering of the edges, a path is called a broken cycle if it can be obtained from a cycle by deleting the edge with highest index.
\end{Def}

The following lemma is equivalent with Whitney's characterization of the coefficients of the chromatic polynomial \cite{whitney1932logical}, equivalently it can be deduced from Tutte's original definition of his polynomial \cite{tutte1954contribution}. 

\begin{Lemma}
Let $G$ be a connected graph on $n$ vertices with an arbitrary ordering of the edges. Let
$$z^{k(G)}T_G(z+1,0)=\sum_{k=1}^{n}c_kz^{n-k}.$$
Then $c_k$ counts the number of edge sets of size $k$ containing no broken cycle.
\end{Lemma}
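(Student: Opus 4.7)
My plan is to prove the identity by the classical Whitney broken-circuit sign-reversing involution, applied directly to the rank--nullity expansion of the Tutte polynomial. Setting $r(A):=v(G)-k(A)$, we have $r(E)=n-1$ (since $G$ is connected) and the standard expansion
$$T_G(x,y)=\sum_{A\subseteq E}(x-1)^{r(E)-r(A)}(y-1)^{|A|-r(A)}.$$
Substituting $x=z+1$, $y=0$ and multiplying by $z^{k(G)}=z$ gives
$$z^{k(G)}T_G(z+1,0)=\sum_{A\subseteq E}(-1)^{|A|-r(A)}z^{n-r(A)},$$
so reading off the coefficient of $z^{n-k}$ yields
$$c_k=\sum_{A\subseteq E,\ r(A)=k}(-1)^{|A|-k}.$$
Any broken-cycle-free set $A$ is automatically acyclic, because a cycle $C\subseteq A$ would contain the broken cycle $C\setminus\{e_{\max}(C)\}$; hence for such $A$ we have $|A|=r(A)=k$ and the contribution is $+1$. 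The remaining task is to cancel the contributions of the sets $A$ with $r(A)=k$ that contain some broken cycle.

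For this I would construct the following sign-reversing involution. Order all broken cycles of $G$ by the index of their (deleted) maximum edge. Given any $A$ with $r(A)=k$ that contains at least one broken cycle, let $B=B(A)$ be the earliest such broken cycle contained in $A$, and let $e=e(A)$ denote the maximum edge of the cycle $B\cup\{e\}$ (so $e\notin B$). Define $\Phi(A):=A\triangle\{e\}$. Because $B\subseteq A\setminus\{e\}$ is a path connecting the two endpoints of $e$, these endpoints already lie in a common component of $(V,A\setminus\{e\})$; toggling $e$ therefore leaves $r$ unchanged while $|A|$ flips parity, so $(-1)^{|A|-k}$ flips and the constraint $r(A)=k$ is preserved.

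The only nontrivial point is verifying that $\Phi$ is actually an involution, i.e.\ that $B(\Phi(A))=B(A)$. If $e\notin A$, then adding $e$ can only create new broken cycles that use $e$; any such broken cycle arises from a cycle whose maximum edge is strictly larger than $e$ (since $e$ itself lies in the broken cycle and therefore cannot be its missing maximum), so every new broken cycle is later in our order than $B$. Conversely, if $e\in A$, then removing $e$ destroys only broken cycles containing $e$, and by the same argument no such broken cycle can precede $B$ (which does not contain $e$). In both cases $B$ remains the earliest broken cycle of $\Phi(A)$, so $\Phi\circ\Phi=\mathrm{id}$.

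Pairing up all such $A$ via $\Phi$ kills all contributions except those coming from broken-cycle-free sets, and we conclude that $c_k$ equals the number of size-$k$ broken-cycle-free edge sets of $G$, as desired. The main obstacle in this plan is precisely the involution-compatibility check in the previous paragraph---the observation that any broken cycle using $e$ must have a deleted maximum edge strictly larger than $e$ is what makes the pairing consistent; everything else is routine bookkeeping.
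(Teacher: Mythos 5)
Your proof is correct. The paper does not actually prove this lemma---it simply cites Whitney's broken-circuit theorem and Tutte's activity expansion---and your argument is precisely the classical sign-reversing involution underlying Whitney's theorem, carried out directly on the rank--nullity expansion of $T_G$ specialised at $y=0$. The bookkeeping is right: from $c_k=\sum_{r(A)=k}(-1)^{|A|-k}$ the broken-cycle-free $A$ are exactly the acyclic ones (any cycle $C\subseteq A$ contains the broken cycle $C\setminus\{\max C\}$), each contributing $+1$; your toggle $A\mapsto A\triangle\{e\}$ preserves $r$ because the path $B$ already joins the endpoints of $e$, and it flips $(-1)^{|A|-k}$. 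One small point worth spelling out: the ordering you impose on broken cycles (by the index of the deleted maximum edge) is only a preorder, since distinct broken cycles can have the same deleted maximum edge (e.g.\ a theta graph whose shared chord is the largest edge); one should fix an arbitrary total order refining it. This is harmless precisely because of the observation you highlight: every broken cycle created or destroyed by toggling $e$ has deleted maximum edge strictly greater than $e$, while every broken cycle whose deleted maximum edge is at most $e$ cannot contain $e$ at all (as $e$ would then be below that maximum), so it survives the toggle unchanged; hence the earliest broken cycle is preserved under any such refinement and $\Phi$ is a genuine involution.
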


We will also need the following form of the FKG-inequality \cite{fortuin1971correlation}.

\begin{Lemma}[Fortuin, Kasteleyn, Ginibre \cite{fortuin1971correlation}]
Suppose that $\mu$ is the uniform measure on $[0,1]^N$, and $X_1,\dots ,X_t$ are non-negative monotone increasing functions in the sense that if $x_i\geq x_i'$ for $i=1,\dots ,N$, then for $1\leq j\leq t$ we have
$$X_j(x_1,\dots ,x_n)\geq X_j(x_1',\dots ,x_n').$$
Then
$$\E_{\mu}\left[\prod_{j=1}^tX_j\right]\geq \prod_{j=1}^t\E_{\mu}[X_j].$$
\end{Lemma}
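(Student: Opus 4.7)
The plan is to reduce the statement to the two-function case $t=2$ and then induct on the dimension $N$. The reduction from general $t$ to $t=2$ is immediate: since each $X_j$ is non-negative and monotone increasing, the product $Y := X_2 X_3 \cdots X_t$ inherits both properties (products of non-negative monotone increasing functions are again non-negative and monotone). Applying the two-function inequality to $X_1$ and $Y$, together with an inner induction on $t$, gives the general result, so it suffices to handle $t=2$.

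For $t=2$ and $N=1$, let $f, g : [0,1] \to \mathbb{R}_{\geq 0}$ be monotone increasing. The classical two-point trick observes that $(f(x)-f(y))(g(x)-g(y)) \geq 0$ for all $x,y \in [0,1]$, since both factors share the same sign. Integrating this against the product measure $\mu \otimes \mu$ on $[0,1]^2$ and expanding yields $2\,\E_\mu[fg] - 2\,\E_\mu[f]\,\E_\mu[g] \geq 0$, which is the desired inequality in one dimension.

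For $t=2$ and general $N$, I would induct on $N$. Define
$$\phi_i(x_N) := \int_{[0,1]^{N-1}} X_i(x_1,\ldots,x_{N-1},x_N)\, dx_1 \cdots dx_{N-1},$$
so that $\phi_i$ is a function of $x_N$ alone. Each $\phi_i$ is non-negative, and monotone increasing in $x_N$ by monotonicity of $X_i$ in its last coordinate together with the fact that integration preserves pointwise inequalities. With $x_N$ held fixed, the maps $(x_1,\ldots,x_{N-1}) \mapsto X_i(x_1,\ldots,x_{N-1},x_N)$ are non-negative and monotone increasing on $[0,1]^{N-1}$, so the inductive hypothesis gives, for each $x_N$,
$$\int_{[0,1]^{N-1}} X_1 X_2\, dx_1 \cdots dx_{N-1} \;\geq\; \phi_1(x_N)\,\phi_2(x_N).$$
Integrating this over $x_N \in [0,1]$ and then applying the $N=1$ base case to the pair $\phi_1,\phi_2$ yields $\E_\mu[X_1 X_2] \geq \E_\mu[X_1]\,\E_\mu[X_2]$, completing the induction.

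The main point that must be handled with care is that non-negativity is genuinely used in two places: it ensures that the product $X_2 \cdots X_t$ in the reduction to $t=2$ remains monotone increasing, and it guarantees that the reduced one-variable functions $\phi_1,\phi_2$ are non-negative so the base case applies. Beyond this, everything is routine Fubini, together with the elementary one-dimensional rearrangement that forms the base case.
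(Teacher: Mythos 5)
Your proof is correct and complete. Note, however, that the paper does not actually prove this lemma: it cites the original Fortuin--Kasteleyn--Ginibre reference and, in the accompanying remark, observes that the continuous version on $[0,1]^N$ follows from the standard discrete FKG inequality on finite distributive lattices by approximating $[0,1]^N$ with the lattices $\{0,\tfrac{1}{M},\dots,1\}^N$. Your argument takes a genuinely different route: it is a self-contained proof of what is usually called the Harris inequality (the product-measure special case of FKG, predating FKG itself), obtained by first reducing to $t=2$ via the closure of non-negative increasing functions under products, then inducting on the dimension $N$ with Fubini and a one-dimensional Chebyshev rearrangement as base case. This is more elementary --- it avoids invoking the general FKG machinery for lattice-log-supermodular measures, which is considerably stronger than what is needed here since the measure is a product. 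One small imprecision: you claim non-negativity of $\phi_1,\phi_2$ is needed ``so the base case applies,'' but the one-dimensional Chebyshev inequality $\E[fg]\geq\E[f]\E[g]$ holds for any pair of increasing integrable functions regardless of sign; the only place non-negativity is genuinely essential is in guaranteeing that the partial product $X_2\cdots X_t$ remains monotone increasing in the reduction to two functions.
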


\begin{Rem}
The above special case of the FKG-inequality might be non-standard, because it is mostly stated for finite distributive lattices, but $[0,1]^N$ can be approximated by the finite distributive lattices $\left\{0,\frac{1}{M},\frac{2}{M},\dots ,1\right\}^N$, so the above version follows from the usual versions.
\end{Rem}

\begin{Lemma} \label{lem: tress vs bcf trees}
Let $G$ be a graph with $n$ vertices, $m$ edges and at most $L$ cycles of length at most $g-1$. Let $c_k$ be the number of edge sets with exactly $k$ edges with no broken-cycle, and let $f_k$ be  the number of forests with exactly $k$ edges. Then
$$c_k\geq \left(\frac{2}{3}\right)^L\left(1-\frac{1}{g}\right)^{m-n+k(G)-L}f_k.$$
\end{Lemma}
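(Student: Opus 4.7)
The plan is to argue that for a uniform random total order on $E(G)$ each forest $F$ of size $k$ has a good chance of being broken-cycle-free, and then average. First I would establish the following characterization: a forest $F$ is broken-cycle-free iff for every edge $e\notin F$ whose endpoints lie in the same component of $F$, the edge $e$ is not the largest-indexed edge of the unique cycle $C_e(F)$ contained in $F\cup\{e\}$. Indeed, if $F$ contains a broken cycle $C\setminus\{\max C\}$, then $C\setminus\{\max C\}$ is a path in $F$ joining the endpoints of $e:=\max C\notin F$, so $C_e(F)=C$ and $\max C_e(F)=e$ violates the condition; conversely, $\max C_e(F)=e$ produces the broken cycle $C_e(F)\setminus\{e\}\subseteq F$. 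Any broken-cycle-free edge set is automatically a forest (since a cycle contains its own broken cycle), so $c_k$ indeed counts broken-cycle-free forests of size exactly $k$.

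Next I would draw i.i.d.\ $U_e\sim\mathrm{Unif}[0,1]$ and order the edges by their $U$-values. For a fixed forest $F$ of size $k$ and each \emph{test} edge $e\notin F$ whose endpoints lie in a common $F$-component, let $X_e$ be the indicator that $e$ is not the edge of $C_e(F)$ with largest $U$-value. Since the $|C_e(F)|$ involved variables are i.i.d.\ uniform, $\E X_e=1-1/|C_e(F)|$. Replacing $U_e$ by $1-U_e$ for every $e\notin F$ preserves the uniform product measure on $[0,1]^m$ and turns each $X_e$ into a coordinate-wise non-decreasing function of the new variables; the FKG inequality in the form recorded above then yields
$$\Pr[F\text{ is broken-cycle-free}]=\E\prod_{e}X_e\ \ge\ \prod_{e}\Bigl(1-\tfrac{1}{|C_e(F)|}\Bigr),$$
where the product ranges over the test edges of $F$.

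Finally I would count the factors. The number of test edges equals the cyclomatic number $m-n+k(G)$, independently of the choice of $F$, and at most $L$ of them give short fundamental cycles, since distinct test edges produce distinct fundamental cycles and each short fundamental cycle is a short cycle of $G$. Short factors satisfy $1-1/|C_e(F)|\ge 2/3$ (as $|C_e(F)|\ge 3$) while long factors satisfy $1-1/|C_e(F)|\ge 1-1/g$; since $2/3\le 1-1/g$ for $g\ge 3$, the product is minimized when the number of short factors is maximal, giving $(2/3)^L(1-1/g)^{m-n+k(G)-L}$. Summing over forests of size $k$ gives $\E_{\text{ordering}}[c_k]\ge(2/3)^L(1-1/g)^{m-n+k(G)-L}f_k$, so some ordering realizes the claimed inequality. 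The most delicate step is verifying FKG-monotonicity after the sign flip on the non-$F$ coordinates; the characterization and the short/long counting are then routine.
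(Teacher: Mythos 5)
Your proof is essentially the same as the paper's: random ordering via i.i.d.\ uniforms, the indicator $X_e$ that a test edge is not the maximum of its fundamental cycle, the coordinate flip $U_e\mapsto 1-U_e$ on the non-tree variables to make the family monotone so FKG applies, and the short/long factor count. One small correction: the number of test edges is \emph{at most} $m-n+k(G)$, not exactly that, when $F$ is a non-maximal forest of size $k<n-k(G)$ (the test edges together with $F$ span a subgraph whose cyclomatic number can be strictly smaller than that of $G$); this does not damage the argument since $1-1/g<1$, so fewer long factors only increase the lower bound, but the ``equals, independently of the choice of $F$'' claim should be replaced by an inequality, which is what the paper does. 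You should also invoke the ordering-independence of $c_k$ (it is a coefficient of $T_G(z+1,0)$) to conclude that \emph{every} ordering, not just some ordering, achieves the bound after averaging.
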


\begin{proof}
We use the fact that $c_k$ is independent of the ordering, so we consider a random ordering of the edges and bound the probability that a fixed spanning forest has no broken cycle with respect to this ordering. We create the random ordering as follows: for each edge $e$ we choose a uniform random number $x_e$ from the interval $[0,1]$. The numbers $x_e$ then determines an ordering of the edges. The probability that two numbers are equal is $0$. 

Let us fix a  spanning forest $F$. Suppose that the  edge $e\notin E(F)$ connects two vertices in the same connected component of $F$. Let $M_F$ be the set of such edges. Finally, let $C_{F,e}$ be the cycle determined by the edge $e$ and the unique path in $F$ determined by the end points of $e$. For $e$ let $y_e=1-x_e$. Furthermore, let
$$X_{F,e}(\{x_f\}_{f\in E(F)},y_e)=\left\{ \begin{array}{ll}
1 & \mbox{if}\ \max_{f\in C_{F,e}\setminus e}x_f\geq 1-y_e,\\
0 & \mbox{if}\ \max_{f\in C_{F,e}\setminus e}x_f< 1-y_e.
\end{array} \right.$$
Furthermore, let
$$X_F=\prod_{e\in M_F}X_{F,e}.$$
Note that
$\E X_{F,e}=1-\frac{1}{|C_{F,e}|},$
and $X_{F,e}$ are monotone increasing functions in terms of the variables $\{x_f\}_{f\in E(F)}$ and $\{y_e\}_{e\in M_F}$. Hence by the FKG-inequality we have 
$$\E X_F=\E\left[ \prod_{e\in M_F}X_{F,e}\right]\geq \prod_{e\in M_F}\E X_{F,e}=\prod_{e\in M_F}\left(1-\frac{1}{|C_{F,e}|}\right).$$
Since $G$ contains at most $L$ cycles of length at most $g-1$ we have
$$\prod_{e\in M_F}\left(1-\frac{1}{|C_{F,e}|}\right)\geq \left(\frac{2}{3}\right)^L\left(1-\frac{1}{g}\right)^{|M_F|-L}.$$
Note that $|M_F|\leq m-n+k(G)$. 
Thus
$$\E X_F\geq \left(\frac{2}{3}\right)^L\left(1-\frac{1}{g}\right)^{m-n+k(G)-L}.$$
Clearly, $X_F=1$ if and only if the spanning forest $F$ contains no broken cycle  with respect to the random ordering. Hence
$$c_k=\E \left(\sum_{F\in \mathcal{F}(G) \atop |F|=k}X_F\right)=\sum_{F\in \mathcal{F}(G) \atop |F|=k}\E X_F\geq \left(\frac{2}{3}\right)^L\left(1-\frac{1}{g}\right)^{m-n+k(G)-L}f_k.$$
Note that in the first expected value we actually considered a random variable that is constant!
\end{proof}

\begin{proof}[Proof of Theorems of \ref{tutte-limit} for general $y$.]
Let $x=z+1$. Then $z\geq 0$.
Let $L=L(G,g)$ denote the number of cycles of length at most $g-1$ in $G$.
By Lemma~\ref{lem: tress vs bcf trees} we have 
$$\sum_{k=1}^nc_kz^{n-k}\geq \left(\frac{2}{3}\right)^L\left(1-\frac{1}{g}\right)^{m-n+k(G)-L}\sum_{k=1}^nf_kz^{n-k}\geq \left(\frac{2}{3}\right)^L\left(1-\frac{1}{g}\right)^{m}\sum_{k=1}^nf_kz^{n-k}.$$
This is equivalent with
$$T_G(z+1,0)^{1/v(G)}\geq \left(\frac{2}{3}\right)^{L/v(G)}\left(1-\frac{1}{g}\right)^{d/2}T_G(z+1,1)^{1/v(G)}.$$
The coefficients of the Tutte polynomial are non-negative, thus
$$T_G(z+1,0)\leq T_G(z+1,y)\leq T_G(z+1,1).$$
This shows that if $\lim_{n\to \infty}\frac{L(G_n,g)}{v(G_n)}=0$ for every $g$, then
$$\lim_{n\to \infty}T_{G_n}(z+1,1)^{1/v(G)}=\lim_{n\to \infty}T_{G_n}(z+1,y)^{1/v(G)}\lim_{n\to \infty}T_{G_n}(z+1,0)^{1/v(G)}.$$
\end{proof}

\bibliography{forest_reference}
\bibliographystyle{plain}

\end{document}